\DeclareMathOperator{\Out}{Out}
\DeclareMathOperator{\Hom}{Hom}
\DeclareMathOperator{\Ima}{Im}
\DeclareMathOperator{\Ker}{Ker}
\DeclareMathOperator{\Der}{Der}
\DeclareMathOperator{\ODer}{ODer}
\DeclareMathOperator{\IDer}{IDer}
\DeclareMathOperator{\Aut}{Aut}
\DeclareMathOperator{\Inn}{Inn}
\DeclareMathOperator{\grad}{gr}
\DeclareMathOperator{\Diff}{Diff}
\DeclareMathOperator{\Sp}{Sp}
\DeclareMathOperator{\GL}{GL}
\DeclareMathOperator{\IOut}{IOut}
\DeclareMathOperator{\ad}{ad}
\DeclareMathOperator{\Ad}{Ad}
\DeclareMathOperator{\Isom}{Isom}
\DeclareMathOperator{\IAut}{IAut}
\def\Z{{\mathbb{Z}}}
\def\R{{\mathbb{R}}}
\def\th{{\theta}}
\def\0{{{\{ 0 \} }}}
\def\w{{\wedge}}
\def\>{{{\geq}}}
\def\<{{{\leq}}}
\def\Sp{{\rm Sp}}
\def\A{{\mathcal{A}}}
\def\gr{{\pi}}
\def\ct{{\hat{L}H}}
\def\cR{{\hat{\R}}}
\def\Th{{\bar{\Theta}}}
\def\ia{{\IOut(\ct /I)}}
\def\de{{\ODer^+(\ct /I)}}
\def\ch{{\Phi_E}}
\theoremstyle{definition}
  \newtheorem{thm}{Theorem}[section]
  \newtheorem{lem}[thm]{Lemma}
  \newtheorem{prop}[thm]{Proposition}
\theoremstyle{definition}
  \newtheorem{defi}{Definition}[section]
  \newtheorem{rem}[thm]{Remark}
  \newtheorem{ex}{Example}
\title{Characteristic classes of fiber bundles}
\author{Takahiro Matsuyuki}
\address{Department of Mathematics, 
Tokyo Institute of Technology, 
2-12-1 Oh-okayama, Meguro-ku, Tokyo 152-8551, Japan.}
\email{matsuyuki.t.aa@m.titech.ac.jp}
\author{Yuji Terashima}
\address{Department of Mathematical and Computing Sciences,
Tokyo Institute of Technology,
2-12-1 Ookayama, Meguro-ku, Tokyo 152-8552, Japan.}
\email{tera@is.titech.ac.jp}
\date{}
\begin{document}
\maketitle

\begin{abstract}
In this paper, we construct new characteristic classes of fiber bundles 
via flat connections with values in  
infinite-dimensional Lie algberas of derivations. 
In fact, choosing a fiberwise metric, 
we construct a chain map to the de Rham complex 
on the base space, and show that the induced map 
on cohomology groups is independent of the choices.
Moreover, we show that applying the construction to a surface bundle, our construction gives Morita-Miller-Mumford classes. 
\end{abstract}

\section{Introduction}
A purpose of this paper is to construct characteristic 
classes of fiber bundles which are not necessarily 
principal bundles whose structure group is 
a finite-dimensional Lie group. The difficulty is that a
 diffeomorphism group, which is considered as 
the structure group for a general fiber bundle, 
is very huge compared to a finite-dimensional 
Lie group. 

An idea to overcome the difficulty is a 
`linearization' which means to replace the 
diffeomorphism group ${\rm Diff}(X)$ for the fiber $X$ 
with the 
automorphism group of the tensor algebra 
of the first homology group $H=H_1 (X)$. A main tool is the Maurer-Cartan form of
 the space of expansions which was originally considered
 by Kawazumi \cite{Kaw1, Kaw2} in the case of free groups.

Diagrammatically, the construction is as follows: 

\begin{center}
\includegraphics[width=13cm]{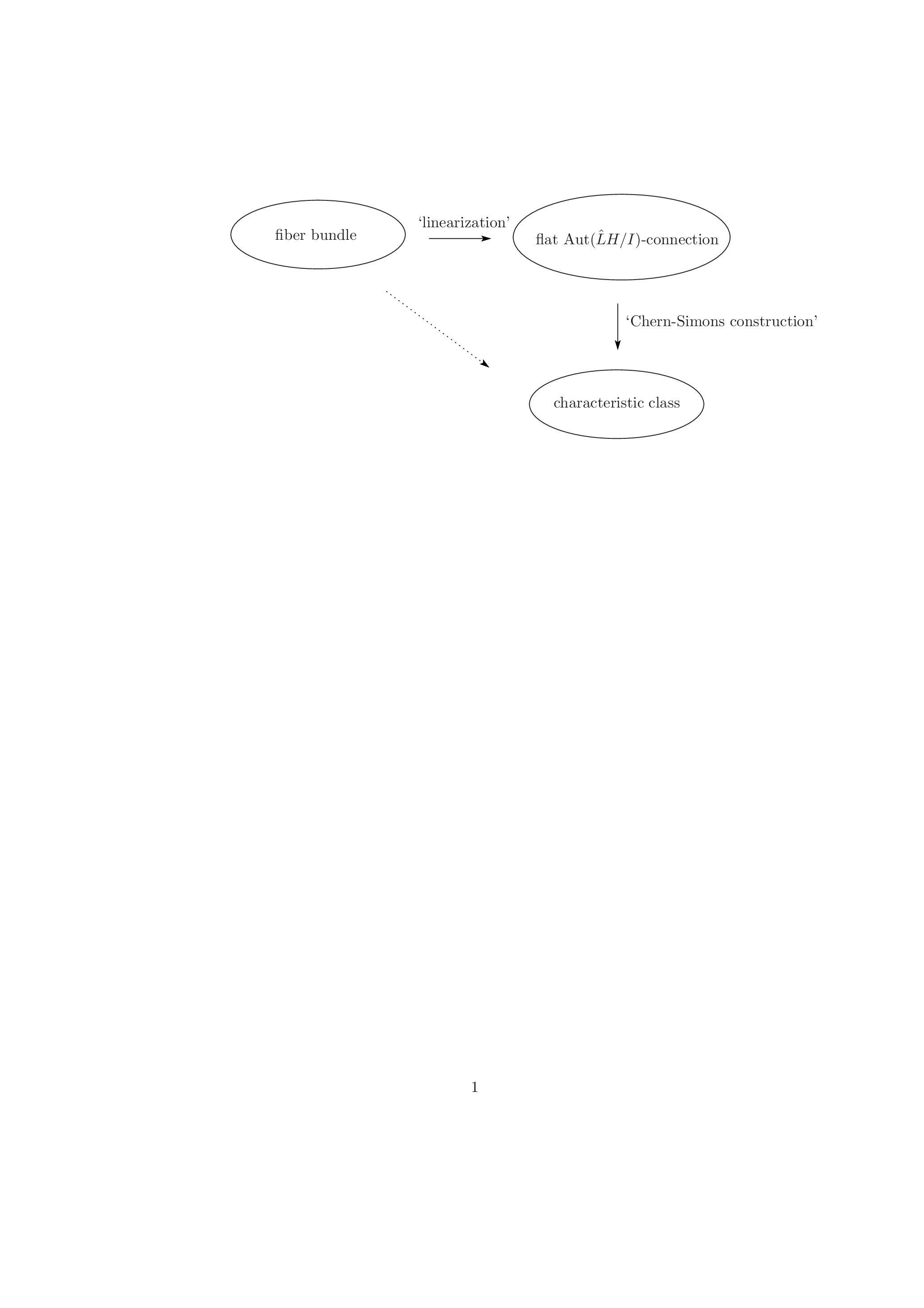}
\end{center}

In this paper, for any fiber bundle $E \to B$ whose structure group satisfies a certain condition, choosing a fiberwise metric, we construct a chain map from 
the Chevalley-Eilenberg complex to the de Rham complex 
on the base space $B$ via  
a flat $\Aut ( \ct /I)$-connection, and show that the induced map 
on cohomology groups is independent of the choices.
Moreover, we show that applying the construction to a closed surface bundle, our construction 
gives Morita-Miller-Mumford classes. The similar construction of Morita-Miller-Mumford classes of 1-punctured surface bundles was previously given in \cite{Kaw2}. 

It is interesting to compare our construction to a different approach on diffeomorphism groups with noncommutative geometry in Lott \cite{L}.

The paper is organized as follows: in Section 2, we introduce tools for construction of characteristic classes. The notions defined in Subsection 2.1 and 2.2 is used in Section 3, and Johnson maps defined in Subsection 2.3 is in Section 4. In Section 3.1 and 3.2, we construct characteristic classes under different two conditions of fiber bundles. In Subsection 3.3, we clarify the relation between characteristic classes constructed in Subsection 3.1 and 3.2. In Subsection 3.4, we describe these characteristic classes by Lie algebra cohomologies of Lie algebra of derivations. In Section 4, we prove that our characteristic classes of a closed surface bundle give Morita-Miller-Mumford classes. 
\medskip

\noindent
{\bf Acknowledgment.} The authors would like to thank T.~Gocho, H.~Kajiura, 
H.~Kasuya, A.~Kato, T.~Sakasai, and T.~Satoh for helpful conversation. Y.~T. is partly supported by Grants-in-Aid for Scientific Research. 
\section{Preliminaries}
For a finitely generated group $\gr$, we set $H=\gr^{\rm ab} \otimes \R$. We consider the completed tensor Hopf algebra $\hat{T}H$ and the completed free Lie algebra $\hat{L}H$ generated by $H$. The Lie algebra $\hat{L}H$ can be regarded as the primitive part of the completed Hopf algebra $\hat{T}H$. (For completed Hopf algebra, see \cite{Q}.) Let $J$ be the augmented ideal of $\hat{T}H$, which consists of all elements whose constant term is zero. The algebra $\hat{T}H$ is described by the projective limit
of finite-dimensional vector spaces 
\[\hat{T}H=\prod_{n=0}^\infty H^{\otimes n}=\varprojlim_n\hat{T}H/J^n,\]
so it is endowed with the limit topology. We define $L_{\geq k}:=\hat{L}H\cap J^k$ and $L_k:=\hat{L}H\cap H^{\otimes k}$.

A closed two-sided Lie ideal $I$ contained in $L_{\geq2}$ is called a {\bf decomposable ideal} of $\hat{L}H$. We identify $I$ with the closed two-sided ideal generated by $I$ in $\hat{T}H$. Then $\hat{T}H/I$ is a completed Hopf algebra. 

We denote the completion of the group Hopf algebra $\R\pi$ of $\pi$ by $\hat{\R}\pi$.
\subsection{Automorphisms and derivations}
In this subsection, we will define infinite-dimensional Lie groups satisfying the diagram
\[\xymatrix{1\ar[r]&\Inn (\ct/I)\ar[r]&\IAut(\ct/I)\ar[r]&\ar[r]\IOut(\ct/I)&1\\
1\ar[r]&\Inn (\ct)\ar[u]^{\text{surj.}}\ar[r]&\IAut_I(\ct)\ar[u]^{\text{surj.}}\ar[r]&\ar[r]\IOut_I(\ct)
\ar[u]^{\text{surj.}}&1}\]
and corresponding Lie algebras 
\[\xymatrix{0\ar[r]&\IDer (\ct/I)\ar[r]&\Der^+(\ct/I)\ar[r]&\ar[r]\ODer^+(\ct/I)&0\\
0\ar[r]&\IDer (\ct)\ar[u]^{\text{surj.}}\ar[r]&\Der_I^+(\ct)\ar[u]^{\text{surj.}}\ar[r]&\ar[r]\ODer^+_I(\ct)
\ar[u]^{\text{surj.}}&0}\]
for a decomposable ideal $I$. Here the rows are exact.

We define the group of {\bf positive automorphisms} of $\ct/I$ by 
\[\IAut(\ct/I):=\{f\in \Aut(\ct/I);f(x)=x+L_{\geq 2}/I\},\]
where $\Aut(\ct/I)$ is the group of completed Lie algebra automorphisms of $\ct/I$. It is a projective limit of finite-dimensional Lie groups, and has the Lie algebra 
\[\Der^+(\ct/I):=\{X\in {\rm Der}(\ct/I);X(H)\subset L_{\geq2}/I\},\]
where ${\rm Der}(\ct/I)$ is the Lie algebra consisted of continuous Lie algebra derivations on $\ct/I$. We call an element of $\Der^+(\ct/I)$ {\bf positive derivation} on $\ct/I$. Since Hopf algebra automorphisms and Hopf derivations of $\hat{T}H/I$ are determined by their restrictions to the primitive part $\ct/I$, $\IAut(\ct/I)$ and $\Der^+(\ct/I)$ can be regarded as a subgroup of completed Hopf algebra automorphisms of $\hat{T}H/I$ and a Lie subalgebra of continuous Hopf derivations on $\hat{T}H/I$, respectively. Here Hopf derivation means derivation and coderivation. 

An inner automorphism of $\ct/I$ is a Hopf algebra automorphism $\iota_a:\ct/I\to \ct/I$ defined for all $x$ in $\ct/I$ by 
\[\iota_a(x):=axa^{-1}\]
where $a$ is a given fixed group-like element of $\hat{T}H/I$. Then $\iota_a$ is a positive automorphism of $\ct/I$. We denote the normal subgroup of $\IAut(\ct/I)$ consisting of inner automorphisms by $\Inn(\ct/I)$. It has the Lie algebra 
\[\IDer(\ct/I):=\{\ad(a)\in \Der^+(\ct/I);a\in\ct/I\},\]
which is the Lie algebra of inner derivations on $\ct/I$. We also define the quotient group
\[\IOut(\ct/I):=\IAut(\ct/I)/\Inn(\ct/I)\]
and its Lie algebra
\[\ODer^+(\ct/I):=\Der^+(\ct/I)/\IDer(\ct/I).\]

We also use the groups of positive automorphisms and outer automorphisms of $\ct$ which preserve a decomposable ideal $I$
\[\IAut_I(\ct):=\{f\in \IAut(\ct);f(I)=I\},\quad \IOut_I(\ct):=\IAut_I(\ct)/\Inn(\ct)\]
These Lie algebras are the Lie algebras of positive derivations and outer derivations on $\ct$ which preserve $I$
\[\Der^+_I(\ct):=\{X\in \Der^+(\ct);X(I)\subset I\},\quad \ODer^+_I(\ct):=\Der^+_I(\ct)/\IDer(\ct), \]
respectively.

If $I$ is a homogeneous ideal, the Lie algebra of derivations on $\hat{L}H/I$ has the natural grading.
\begin{defi}
Let $H$ be a vector space and $I$ a homogeneous decomposable ideal of $\hat{L}H$. The degree $k$ component of $\Der(\ct/I)$ is defined by
\[\Der^k(LH/I):=\{X\in \Der^+(\ct/I);X(H)\subset L_{k+1}/(L_{k+1}\cap I)\}.\]
Then we have the decomposition
\[\Der^+(\ct/I)=\prod_{k=1}^\infty\Der^k(LH/I).\]
In the same way, $\IDer^k(LH/I)$ and $\ODer^k(LH/I)$ is defined.
\end{defi}
\subsection{Space of expansions}

In this subsection, we define the spaces containing Chen expansions of a fundamental group. These spaces play an important role in our construction of characteristic classes in Section 3. It is a genaralization of the spaces of Magnus expansions of free groups in \cite{Kaw2}.

\begin{defi}
For a decomposable ideal $I$ of $\hat{L}H$, an {\bf $I$-expansion} of $\gr$ is a completed Hopf algebra isomorphism $\th : 
\cR \gr \to \hat{T}H /I$ satisfying 
\[\th (g)=1+[g]+J^2/I\]
for any $g \in \gr$. The space of $I$-expansions is denoted by $\Theta (\gr ,I)$. 
\end{defi}

The group $\IAut(\ct/I)$ acts on $\Theta (\gr ,I)$ as follows:
\[f\cdot \theta:=f\circ \theta\]
for $f\in\IAut(\ct/I)$ and $\theta\in\Theta(\gr,I)$. This action is free and transitive because $\theta\circ\theta_0^{-1}\in\IAut(\ct/I)$ and 
\[(\theta\circ\theta_0^{-1})\cdot\theta_0=\theta\]
for all $\theta$, $\theta_0\in \Theta(\gr,I)$. We describe the space of conjugacy classes of $I$-expansion by $\Th(\gr,I):=\Inn(\ct/I)\backslash \Theta(\gr,I)$. The group $\ia$ acts on $\Th(\gr,I)$ freely and transitively. (We denote the conjugacy class of $\theta\in\Theta(\gr,I)$ by $[\theta]$.)

We consider the space $\Th(\gr)$ of conjugacy classes of all possible expansion of $\gr$. It is parametrized by \[\IOut(\ct)\times_{\IOut_{I_0}(\ct)}\Th(\gr,I_0)\] fixing a decomposable ideal $I_0$ which satisfies $\Theta(\gr,I_0)\neq\emptyset$. Here we consider $\IOut_I(\ct)$ acts on $\Th(\gr,I)$ through the natural homomorphism $\IOut_I(\ct)\to \ia$. The automorphism group $\Out(\pi)$ of $\pi$ acts on $\Th(\pi)$ by
\[\varphi\cdot [\theta]=[|\varphi|\circ \theta\circ \varphi^{-1}]\]
for $\varphi\in\Out(\pi)$, $\theta\in \Th(\pi)$. Here $|\varphi|$ means the Hopf algebra isomorphism between quotient Hopf algebras of the tensor Hopf algebra $\hat{T}H$ induced by $\varphi\in\Out(\gr)$. 

The space $\mathcal{I}(\pi)$ of all ideal $I$ satisfying $\Th(\pi,I)\neq\emptyset$ is parametrized by \[\IOut(\ct)/\IOut_{I_0}(\ct)=\IAut(\ct)/\IAut_{I_0}(\ct)\] fixing  a decomposable ideal $I_0$ which satisfies $\Th(\gr,I_0)\neq\emptyset$. The group 
\[\GL(\pi):=\Ima (\Aut(\pi)\to \GL(H))=\Ima (\Out(\pi)\to \GL(H))\]
acts on $\mathcal{I}(\pi)$ by 
\[\varphi\cdot I=\varphi(I)\]
for $\varphi\in \GL(\pi)$, $I\in\mathcal{I}(\pi)$. Through the natural homomorphism $\Out(\pi)\to \GL(\pi)$, the natural map $\Th(\pi)\to\mathcal{I}(\pi)$ is regarded as an $\Out(\pi)$-equivariant $\Th(\pi,I_0)$-bundle.
 
\subsection{Generalization of a Johnson map} 
We define a generalization of a Johnson map for free groups by \cite{Kaw1}. The results in this subsection shall be used only in Section 4.

\begin{prop}\label{prop:unique}
{\it Let $\pi$ be a finitely generated group. If $\gr$ has a homogeneous decomposable ideal, i.e., there exists a homogeneous decomposable ideal $I$ such that $\Theta(\pi,I)\neq \emptyset$, then such an ideal is unique. We denote it by $I_\pi$.}  
\end{prop}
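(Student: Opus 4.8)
The plan is to pass to the associated graded Hopf algebras, where the hypothesis of homogeneity allows one to recover the ideal itself rather than merely its homogenization. Recall that both $\cR\pi$ and $\hat{T}H/I$ carry the decreasing filtration by powers of their augmentation ideals; write $\mathfrak{m}\subset\cR\pi$ for the augmentation ideal of $\cR\pi$, and note that the augmentation ideal of $\hat{T}H/I$ is $J/I$. First I would show that any $I$-expansion $\theta\in\Theta(\pi,I)$ induces an isomorphism on associated graded Hopf algebras. Since $\theta$ is a Hopf algebra isomorphism it intertwines the counits, hence maps $\mathfrak{m}$ isomorphically onto $J/I$ and therefore maps $\mathfrak{m}^n$ onto $(J/I)^n$ for every $n$. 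Thus $\theta$ is a strict isomorphism of filtered Hopf algebras and induces an isomorphism $\grad\theta\colon\grad(\cR\pi)\xrightarrow{\sim}\grad(\hat{T}H/I)$ of graded Hopf algebras. The normalization $\theta(g)=1+[g]+J^2/I$ says precisely that the degree-one component of $\grad\theta$ is the identity of $H$ under the canonical identifications $\mathfrak{m}/\mathfrak{m}^2\cong H\cong(J/I)/(J^2/I)$.

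Next I would use homogeneity to identify the target. Because $I$ is a homogeneous ideal, $\hat{T}H/I=\prod_n H^{\otimes n}/(I\cap H^{\otimes n})$ is already graded, and its degree-$n$ associated graded piece $(J^n+I)/(J^{n+1}+I)$ is canonically $H^{\otimes n}/(I\cap H^{\otimes n})$; hence $\grad(\hat{T}H/I)\cong\hat{T}H/I$ as graded Hopf algebras, compatibly with the identification of degree-one parts with $H$. Consequently every homogeneous decomposable $I$ with $\Theta(\pi,I)\neq\emptyset$ yields an isomorphism $\grad(\cR\pi)\cong\hat{T}H/I$ of graded Hopf algebras which is the identity on $H$ in degree one.

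Finally, given two homogeneous decomposable ideals $I_1,I_2$ with nonempty expansion spaces, choose expansions $\theta_1,\theta_2$ and compose to obtain a graded Hopf algebra isomorphism $\phi:=\grad\theta_2\circ(\grad\theta_1)^{-1}\colon\hat{T}H/I_1\to\hat{T}H/I_2$ which is the identity on $H$ in degree one. Let $q_i\colon\hat{T}H\to\hat{T}H/I_i$ be the quotient maps. Both $\phi\circ q_1$ and $q_2$ are continuous algebra homomorphisms $\hat{T}H\to\hat{T}H/I_2$ whose restriction to the generators $H$ is the standard inclusion, so by the universal property of the completed tensor algebra they coincide: $\phi\circ q_1=q_2$. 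Since $\phi$ is injective, $\Ker q_1=\Ker q_2$, i.e.\ the closed two-sided ideals generated by $I_1$ and $I_2$ in $\hat{T}H$ agree; intersecting with the primitive part $\ct$, on which $q_i$ has kernel exactly $I_i$, recovers $I_1=I_2$.

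I expect the step requiring the most care to be the first one together with the role of homogeneity. A filtered isomorphism of the quotients alone would only match the homogenizations of $I_1$ and $I_2$, so the genuine content is that passing to the associated graded is both available (a Hopf isomorphism strictly preserves the augmentation filtration) and harmless (by homogeneity $\grad(\hat{T}H/I)$ collapses back to $\hat{T}H/I$), which is exactly what pins down the ideal itself rather than its associated graded.
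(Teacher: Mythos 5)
Your proof is correct and is essentially the paper's own argument in functorial packaging: the paper likewise composes two expansions into an isomorphism $f=\theta'\circ\theta^{-1}$ that is the identity modulo $J^2$, and then checks degree by degree, using homogeneity, that each homogeneous piece of $I$ lies in $I'$ --- exactly the computation hidden in your identification of $\grad(\hat{T}H/I)$ with the graded quotient and in the step $\phi\circ q_1=q_2$. The only slip is notational: the associated graded of $\hat{T}H/I$ is the uncompleted direct sum $TH/I=\bigoplus_n H^{\otimes n}/(I\cap H^{\otimes n})$ rather than the completed product, but this does not affect the argument.
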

\begin{proof}
Suppose a homogeneous decomposable ideal $I'$ also satisfies $\Theta(\pi,I')\neq\emptyset$. Take $\theta\in\Theta(\pi,I)$, $\theta'\in\Theta(\pi,I')$ and set $f:=\theta'\circ\theta^{-1}$. Then 
\[f(x)=x+J^2/I'\]
for $x\in H$. From the equation
\[0=f(y)=y+J^{k+1}/(J^{k+1}\cap I')\]
for $y\in H^{\otimes k}\cap I$, we get $y\in I'$ by assumption that $I'$ is homogeneous. Since $I$ is also homogeneous, we obtain $I\subset I'$. In the same way, we can prove $I\supset I'$. Therefore we have $I=I'$. 
\end{proof}
\begin{defi}Let $\gr$ be a finitely generated group which has a homogeneous decomposable ideal $I_\gr$. 
For $\theta\in \Theta(\gr,I_\gr)$, we define the {\bf Johnson map} $\tau^\theta:\Aut(\gr)\to \IAut(\ct/I_\gr)$ associated to $\theta$ by
\[\tau^\theta(\varphi)=\theta\circ \varphi\circ\theta^{-1}\circ|\varphi|^{-1}.\]
Note that uniqueness of $I_\pi$ gives $|\varphi|(I_\pi)=I_\pi$. By the decomposition 
\[\IAut(\ct/I_\pi)\subset \prod_{p=1}^\infty\Hom(H,L_{p+1}/(L_{p+1}\cap I_\pi)),\]
we denote the $\Hom(H,L_{p+1}/(L_{p+1}\cap I_\pi))$ component of $\tau^\theta$ by $\tau^\theta_p$. 
\end{defi}
For a group-like element $a\in \hat{T}H/I_\gr$, the equation
\[\tau^{\iota_a\theta}(\varphi)=\iota_a\theta\varphi\theta^{-1}\iota_a^{-1}|\varphi|^{-1}=\iota_{a}\tau^\theta(\varphi)\iota_{|\varphi|(a)}^{-1}\]
holds. Then we can define $\tau^{[\theta]}:\Out(\gr)\to \IOut(\ct/I_\gr)$ for $[\theta]\in\Th(\gr,I_\gr)$. We can also obtain $\tau_1^{[\theta]}$ for $[\theta]\in\Th(\gr,I_\gr)$.

\begin{defi}Given a filtered algebra $A$ with decreasing filtration $\{F_k(A)\}_{k=0}^\infty$,
\[\grad(A):=\bigoplus_{k=0}^\infty F_k(A)/F_{k+1}(A)\]
is a graded algebra by the multiplication induced by the multiplication of $A$. When $A$ is the completion of a graded algebra $V$, $\gr(\hat{V})=V$ holds.

Similar thing happens when $G$ is a group with decreasing central filtration $\{F_k(G)\}_{k=0}^\infty$. Then
\[\grad(G):=\bigoplus_{k=0}^\infty F_k(G)/F_{k+1}(G)\]
is a graded Lie algebra by the Lie bracket induced by the commutator of $G$.
\end{defi}

The following Propositions \ref{prop:emb}, \ref{prop:John}, \ref{prop:aut} and \ref{prop:out} are straightforward generalizations of those in \cite{Kaw1}.
\begin{prop}\label{prop:emb}
{\it For an $I_\gr$-expansion $\theta\in\Theta(\gr,I_\gr)$, the map $\grad(\theta):\grad(\gr)\otimes \R\to \grad(\hat{T}H/I_\gr)=TH/I_\gr$ induced by $\theta$ is the natural identification $\grad(\gr)\otimes \R\to LH/I_\gr$. Especially, $\grad(\theta)$ does not depend on a choice of an $I_\gr$-expansion $\theta$. (Here the group $\gr$ and the algebra $\hat{T}H$ is filtered by the lower central series $\{\Gamma_k\gr\}_{k=1}^{\infty}$ and the power series $\{J^k\}_{k=1}^\infty$ of the argumented ideal $J$, respectively.) }
\end{prop}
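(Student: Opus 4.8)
The plan is to exploit that $\theta$ is an isomorphism of \emph{filtered} Hopf algebras and that any graded Lie algebra generated in degree one is rigid, being determined by its degree-one part. First I would record that $\theta$ respects filtrations: since $\theta$ is counit-preserving, it carries the augmentation ideal $\hat{I}$ of $\cR\gr$ onto the image of $J$ in $\hat{T}H/I_\gr$, and being multiplicative it sends $\hat{I}^k$ onto $J^k/I_\gr$ for every $k$. Hence $\theta$ descends to an isomorphism $\grad(\theta)$ of associated graded Hopf algebras from $\grad(\cR\gr)$ to $\grad(\hat{T}H/I_\gr)=TH/I_\gr$, where the last identification uses that $I_\gr$ is homogeneous and $\hat{T}H/I_\gr$ is the completion of a graded algebra.

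Next I would pass to primitive parts. A Hopf algebra isomorphism carries primitives to primitives, so $\grad(\theta)$ restricts to an isomorphism on primitive elements. The primitive part of $TH/I_\gr$ is $LH/I_\gr$, while the primitive part of $\grad(\cR\gr)$ is, by the classical identification of the lower central series with the augmentation filtration (Quillen, Magnus), canonically $\grad(\gr)\otimes\R$. This exhibits the restriction of $\grad(\theta)$ to $\grad(\gr)\otimes\R$ as a graded Lie algebra isomorphism onto $LH/I_\gr$.

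The key computation is in degree one: the defining property $\theta(g)=1+[g]+J^2/I_\gr$ shows that $\grad(\theta)$ acts as the identity on $\Gamma_1\gr/\Gamma_2\gr\otimes\R=H=L_1$. Now both source and target are graded Lie algebras generated by their degree-one parts — the former because it is the associated graded of the lower central series, the latter because $LH$ is generated by $H$ and $I_\gr$ is homogeneous. A graded Lie algebra homomorphism out of an algebra generated in degree one is determined by its restriction to degree one, so $\grad(\theta)$ must coincide with the canonical map induced by $\mathrm{id}_H$, namely the natural identification $\grad(\gr)\otimes\R\to LH/I_\gr$. Since this description refers to $\theta$ only through its degree-one behaviour, which is the same for every element of $\Theta(\gr,I_\gr)$, independence of the choice of $\theta$ follows at once.

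The step I expect to be the main obstacle is the identification of the primitive part of $\grad(\cR\gr)$ with $\grad(\gr)\otimes\R$: this is the one genuinely non-formal input, relying on the theorem (over $\R$) that the canonical map from the lower central series graded Lie algebra into the primitives of the graded group algebra is an isomorphism. Everything afterward — preservation of primitives and brackets under $\grad(\theta)$, and the rigidity of morphisms between Lie algebras generated in degree one — is formal and uses only that $I_\gr$ is homogeneous so that the gradings are compatible.
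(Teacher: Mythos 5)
Your argument is correct, but it takes a genuinely different route from the paper's. The paper proves the proposition by a direct computation: decomposing $\theta=\sum_k\theta_k$ into graded pieces, it checks that for $x\in\Gamma_{k-1}\gr$ and $y\in\gr$ one has $\theta_k([x,y])=[\theta_{k-1}(x),[y]]$, and then concludes by induction on $k$, using that $\Gamma_k\gr/\Gamma_{k+1}\gr$ is spanned by such commutators and that $\theta_1$ is the identity on $H$ by the defining property of expansions; this is elementary, self-contained, and exhibits $\grad(\theta)$ explicitly as the map sending iterated commutators to the corresponding iterated brackets. You instead replace the computation by structure: a filtered Hopf isomorphism induces a graded Hopf isomorphism on associated graded objects, primitives go to primitives, the primitives of $TH/I_\gr$ are $LH/I_\gr$ (Friedrichs/Milnor--Moore), the primitives of $\grad(\cR\gr)$ are $\grad(\gr)\otimes\R$ (Quillen's theorem, which you correctly flag as the one non-formal input; note it is stated for the uncompleted group algebra, but the associated graded is unchanged by completion, so this is harmless), and finally rigidity of graded Lie algebra maps out of a Lie algebra generated in degree one pins $\grad(\theta)$ down and gives independence of $\theta$ for free. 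What your approach buys: bijectivity of $\grad(\theta)$ --- the ``identification'' part of the statement --- comes out automatically, whereas the paper's induction only identifies the map and leaves bijectivity implicit; it also isolates exactly where homogeneity of $I_\gr$ and the expansion property enter. What it costs: the appeal to Quillen's theorem, a much heavier input than the paper's two-line computation. Note finally that the two proofs share the same engine: the paper's induction over commutators is a hands-on instance of your rigidity principle, both resting on the generation of $\grad(\gr)\otimes\R$ in degree one.
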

\begin{proof}
Decompose $\theta$ into the sum 
\[\theta=\sum_{k=0}^\infty\theta_k\]
with respect to grading. For a positive integer $k$, $x\in \Gamma_{k-1}\gr$ and $y\in \gr$, 
\[\theta([x,y])=1+\theta_{k-1}(x)[y]-[y]\theta_{k-1}(x)+J^{k+1}/(J^{k+1}\cap I_\gr).\]
Therefore $\theta_k([x,y])=[\theta_{k-1}(x),[y]]$ holds. Thus the result follows from it by induction.
\end{proof}

Let $\gr$ be a finitely generated group. The {\bf Andreadakis filtration} of the automorphism group $\Aut(\gr)$ of $\gr$ is defined by
\[\mathcal{A}_\gr(k):=\Ker(\Aut(\gr)\to \Aut(\gr/\Gamma_{k+1}\gr)).\]
Remark $\mathcal{A}_\gr(1)=\IAut(\gr)$. The $k$-th Johnson morphism $\tau_k:\mathcal{A}_\gr(k)/\mathcal{A}_\gr(k+1)\to \Hom(\gr^{\rm ab},\grad^{k+1}(\gr))$ is defined as follows: For $\varphi\in \mathcal{A}_\gr(k)$ and $x\in \gr$, we set $s_\varphi(x):=x^{-1}\varphi(x)\in \Gamma_{k+1}\gr$. If $x\in \Gamma_2\gr$, $s_\varphi(x)\in \Gamma_{k+2}\gr$ holds. Thus, we can define $\tau_k(\varphi)([x])=[s_\varphi(x)]\in \Gamma_{k+1}\gr/\Gamma_{k+2}\gr$. In addition, the fact $\tau_k(\varphi)=0$ for $\varphi\in \mathcal{A}_\gr(k+1)$ induces the homomorphism
\[\tau_k:\mathcal{A}_\gr(k)/\mathcal{A}_\gr(k+1)\to \Hom(\gr^{\rm ab},\grad^{k+1}(\gr)).\]
The direct sum of these maps with respect to $k$ defines the Lie algebra homomorphism
\[\tau:\grad^+(\Aut(\gr))\to \Der^+(\grad(\gr))\]
(see \cite{S} for detail). Here we denote the positive degree part of $\grad(G)$ by $\grad^+(G)$ for a group $G$ with central filtration. By definition, an inner automorphism is mapped to an inner derivation by this map. Then the Lie algebra homomorphism
\[\bar{\tau}:\grad^+(\Out(\gr))\to \ODer^+(\grad(\gr))\]
is induced by $\tau$. Here $\Out(\gr)$ is filtered by
\[\mathcal{O}_\gr(k):=\Ker(\Out(\gr)\to \Out(\gr/\Gamma_{k+1}\gr)).\]
\begin{prop}\label{prop:John}
{\it For $\theta\in \Theta(\gr,I_\gr)$ and $m\geq1$, the map induced by $\tau^\theta$
\[\grad^m(\tau^\theta):\grad^m(\Aut(\gr))\to \grad^m(\Aut(\ct/I_\gr))=\Der^m(LH/I_\gr)\]
equals the $m$-th Johnson homomorphism $\tau_m$ through the natural identification $\grad(\gr)\otimes\R\to LH/I_\gr$. Especially $\grad^+(\tau^\theta)$ does not depend on a choice of an $I_\gr$-expansion $\theta$. Here the groups $\Aut(\gr)$ and $\Aut(\ct/I_\gr)$ is filtered by $\{\mathcal{A}_\gr(k)\}_{k=0}^{\infty}$ and $\{\Aut^{\geq k}(\ct/I_\gr)\}_{k=0}^\infty$, where \[\Aut^{\geq k}(\ct/I_\gr)=\{f\in \Aut(\ct/I_\gr);f(x)=x+ L_{\geq k+1}/(L_{\geq k+1}\cap I_\gr)\text{ for } x\in H\},\]respectively. }
\end{prop}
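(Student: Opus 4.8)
The plan is to compute the degree-$m$ leading term of $\tau^\theta(\varphi)$ directly from the definition and to match it against the Johnson homomorphism $\tau_m$, working throughout inside the quotient Hopf algebra $\hat{T}H/I_\gr$ and passing between group-like and primitive elements by means of $\log$. First I would record that for $\varphi\in\mathcal{A}_\gr(m)$ with $m\geq1$ one has $\varphi\in\mathcal{A}_\gr(1)=\IAut(\gr)$, so $\varphi$ acts trivially on $\gr^{\rm ab}$ and hence the induced Hopf algebra automorphism satisfies $|\varphi|=\mathrm{id}$ on $\hat{T}H/I_\gr$; consequently $\tau^\theta(\varphi)=\theta\circ\varphi\circ\theta^{-1}$. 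Writing $F:=\tau^\theta(\varphi)$ and $s_\varphi(x):=x^{-1}\varphi(x)\in\Gamma_{m+1}\gr$ for $x\in\gr$, the multiplicativity of $\theta$ gives the basic identity $F(\theta(x))=\theta(\varphi(x))=\theta(x)\,\theta(s_\varphi(x))$.

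Next I would take logarithms. Since $F$ is a Hopf algebra automorphism it carries group-like elements to group-like elements and commutes with $\log$, so $F(\log\theta(x))=\log\bigl(\theta(x)\theta(s_\varphi(x))\bigr)$. Expanding the right-hand side by the Baker--Campbell--Hausdorff formula, and noting that $\log\theta(x)$ has leading degree $1$ while $\log\theta(s_\varphi(x))$ has leading degree $\geq m+1$ (because $s_\varphi(x)\in\Gamma_{m+1}\gr$), every bracket correction lies in degree $\geq m+2$. This yields
\[F(\log\theta(x))=\log\theta(x)+\log\theta(s_\varphi(x))+(\text{degree}\geq m+2).\]
From this I would first read off the filtration compatibility: $F(\log\theta(x))-\log\theta(x)$ lies in degree $\geq m+1$, and since the degree-$1$ parts $[x]$ of $\log\theta(x)$ span $H$, it follows that $F(a)=a+L_{\geq m+1}/(L_{\geq m+1}\cap I_\gr)$ for $a\in H$, i.e. $F\in\Aut^{\geq m}(\ct/I_\gr)$. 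Thus $\tau^\theta$ respects the two filtrations and induces $\grad^m(\tau^\theta)$.

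Then I would extract the degree-$(m+1)$ component. Writing $\log\theta(x)=a+w_2+w_3+\cdots$ with $a=[x]\in H$ and $w_j\in L_j/(L_j\cap I_\gr)$, and using that an element of $\Aut^{\geq m}(\ct/I_\gr)$ shifts $L_j$ by degree $\geq m$, the only contributions to the degree-$(m+1)$ part of $F(\log\theta(x))$ are $w_{m+1}$ and $F_m(a)$, where $F_m\in\Hom(H,L_{m+1}/(L_{m+1}\cap I_\gr))$ is the degree-$m$ component of $F$ (all corrections to the pieces $w_j$ with $j\geq2$ sit in degree $\geq j+m\geq m+2$). Comparing with the right-hand side, whose degree-$(m+1)$ part is $w_{m+1}+[\log\theta(s_\varphi(x))]_{m+1}$, gives $F_m(a)=[\log\theta(s_\varphi(x))]_{m+1}$. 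By Proposition~\ref{prop:emb} the leading term $[\log\theta(s_\varphi(x))]_{m+1}$ is precisely the image of the class $[s_\varphi(x)]\in\grad^{m+1}(\gr)\otimes\R$ under the natural identification $\grad(\gr)\otimes\R\to LH/I_\gr$, while by definition $\tau_m(\varphi)([x])=[s_\varphi(x)]$. Hence $\grad^m(\tau^\theta)(\varphi)=F_m=\tau_m(\varphi)$ under the identification, which is the assertion; the independence of $\theta$ is then automatic, since $\tau_m$ makes no reference to $\theta$.

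I expect the main obstacle to be the degree bookkeeping in the two places where cross terms could in principle intrude: ensuring that both the Baker--Campbell--Hausdorff brackets and the corrections coming from $F$ acting on the higher homogeneous pieces $w_j$ of $\log\theta(x)$ all land in degree $\geq m+2$, so that the degree-$(m+1)$ comparison isolates exactly $F_m$ on one side and the leading term of $\log\theta(s_\varphi(x))$ on the other. Care is also needed to keep every manipulation inside $\hat{T}H/I_\gr$ and to justify that $F$ commutes with $\log$ on group-like elements and that $\theta$ sends $\Gamma_{m+1}\gr$ into $1+J^{m+1}/I_\gr$, but these are formal once the filtration compatibility established above is in hand.
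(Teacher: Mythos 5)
Your proposal follows the same core computation as the paper --- the identity $\theta(\varphi(x))=\theta(x)\,\theta(s_\varphi(x))$, extraction of the degree-$(m+1)$ term, and Proposition~\ref{prop:emb} to identify the leading term of $\theta(s_\varphi(x))$ with $[s_\varphi(x)]$ --- but the step where you establish the filtration compatibility has a genuine gap as written. From BCH you correctly conclude that $F(\log\theta(x))-\log\theta(x)$ has degree $\geq m+1$ for every $x\in\pi$. However, this does not immediately give $F(a)-a\in L_{\geq m+1}/(L_{\geq m+1}\cap I_\pi)$ for $a\in H$: writing $\log\theta(x)=[x]+w_2+w_3+\cdots$, the difference $F(\log\theta(x))-\log\theta(x)$ contains, besides $F([x])-[x]$, the cross-terms $F(w_j)-w_j$ for $j\geq 2$. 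At that stage you only know $F\in\IAut(\ct/I_\pi)=\Aut^{\geq 1}(\ct/I_\pi)$, so $F(w_j)-w_j$ is only known to have degree $\geq j+1$; for $m\geq 3$, say, the term $F(w_2)-w_2$ may have a nonzero degree-$3$ component, and the vanishing of the total sum in degrees $\leq m$ only says that such components cancel one another, not that $F([x])-[x]$ itself lies in degree $\geq m+1$. Note the circularity: the degree control you invoke later (``an element of $\Aut^{\geq m}$ shifts $L_j$ by degree $\geq m$'') is exactly what is needed at this earlier point, but it presupposes the conclusion $F\in\Aut^{\geq m}(\ct/I_\pi)$, which is what you are trying to prove; the reason you give instead (that the classes $[x]$ span $H$) does not address the cross-terms at all.

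The gap is fixable by a bootstrap, which is precisely what the paper's induction on $m$ accomplishes. Suppose $F\in\Aut^{\geq k}(\ct/I_\pi)$ for some $1\leq k<m$. Then $F(w_j)-w_j$ has degree $\geq j+k\geq k+2$ for all $j\geq2$, so the degree-$(k+1)$ component of $F(\log\theta(x))-\log\theta(x)$ is exactly $F_k([x])$; since the whole difference has degree $\geq m+1>k+1$, this forces $F_k=0$, i.e.\ $F\in\Aut^{\geq k+1}(\ct/I_\pi)$. Iterating from $k=1$ up to $k=m-1$ yields $F\in\Aut^{\geq m}(\ct/I_\pi)$, after which your degree-$(m+1)$ extraction and the appeal to Proposition~\ref{prop:emb} go through exactly as you wrote them. (The paper's proof handles the same point by inducting on $m$: for $\varphi\in\mathcal{A}_\pi(m)$ and $k<m$, the inductive identification $\tau^\theta_k=\tau_k$ together with the vanishing of $\tau_k$ on $\mathcal{A}_\pi(k+1)\supseteq\mathcal{A}_\pi(m)$ kills the low-degree components of $F$ directly.)
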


\begin{proof}
Suppose $\tau^\theta_k(\varphi)=\tau_k(\varphi)$ for $\varphi\in \A_\gr(k)$ and $1\leq k< m$. Then $\tau^\theta_k(\varphi)=0$ holds for $\varphi\in \A_\gr(m)$ and $1\leq k< m$. From
\[\tau^\theta(\varphi)([x])=\theta\varphi\theta^{-1}([x])=\theta(\varphi(x))+\tau_m^\theta(\varphi)([x])\]
for $x\in\gr$, we obtain
\[\tau_m(\varphi)([x])=\theta_{m+1}(s_\varphi(x))\equiv \theta(s_\varphi(x))-1=\theta(x)^{-1}\theta(\varphi(x))-1
\equiv\tau_m^\theta(\varphi)([x])\]
modulo $L_{\geq m+2}/(L_{\geq m+2}\cap I_\gr)$. Then we have $\tau^\theta_m(\varphi)=\tau_m(\varphi)$ for all $m\geq 1$ and $\varphi\in \A_\gr(m)$ inductively. 
\end{proof}

\begin{prop}\label{prop:aut}
{\it The first Johnson map $\tau^\theta_1$ is a cocycle of $\Aut(\gr)$ with coefficient $\Der^1(LH/I_\gr)$, and the cohomology class $[\tau^\theta_1]\in H^1(\Aut(\gr); \Der^1(LH/I_\gr))$ does not depend on a choice of an $I_\gr$-expansion $\theta$. }
\end{prop}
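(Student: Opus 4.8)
The plan is to first promote the defining identity for $\tau^\theta$ to a multiplicative cocycle relation valued in the group $\IAut(\ct/I_\gr)$, and then to extract its degree-$1$ component, where the nonabelian multiplicative relation collapses to the additive $1$-cocycle condition. Throughout I write $f_1\in\Hom(H,L_2/(L_2\cap I_\gr))=\Der^1(LH/I_\gr)$ for the degree-$1$ component of any $f\in\IAut(\ct/I_\gr)$, and I take the coefficient action of $\Aut(\gr)$ on $\Der^1(LH/I_\gr)$ to be $\varphi\cdot X:=|\varphi|\circ X\circ|\varphi|^{-1}$. This is well defined since $|\varphi|(I_\gr)=I_\gr$ (by uniqueness of $I_\gr$, as noted after the definition of the Johnson map) and $|\varphi|$ is graded of degree $0$, so it descends to an automorphism of $L_2/(L_2\cap I_\gr)$.

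The key computational input, which I expect to be the main obstacle, is the claim that the degree-$1$ component is additive under composition: $(f\circ g)_1=f_1+g_1$ for $f,g\in\IAut(\ct/I_\gr)$. This holds because for $x\in H$ one has $g(x)=x+g_1(x)+(\deg\geq 3)$ with $g_1(x)\in L_2$, while $f$, being a Lie homomorphism with $f(x)=x+(\deg\geq 2)$ on $H$, fixes $L_2$ modulo terms of degree $\geq 3$; hence $f(g(x))=x+f_1(x)+g_1(x)+(\deg\geq 3)$. The care here lies precisely in making rigorous that elements of $\IAut(\ct/I_\gr)$ act as the identity on $L_2$ modulo $L_{\geq 3}$. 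The same bookkeeping gives $(g^{-1})_1=-g_1$, and since $|\varphi|$ preserves the grading, $(|\varphi|\circ f\circ|\varphi|^{-1})_1=|\varphi|\circ f_1\circ|\varphi|^{-1}=\varphi\cdot f_1$. Everything else is formal once one knows that on this degree-$1$ layer the group structure becomes abelian and that conjugation by $|\varphi|$ realizes the coefficient action.

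With this in hand I would derive the cocycle property. Using $|\varphi\psi|=|\varphi||\psi|$ and inserting $\theta^{-1}\theta$ and then $|\varphi|^{-1}|\varphi|$ into $\tau^\theta(\varphi\psi)=\theta\varphi\psi\theta^{-1}|\psi|^{-1}|\varphi|^{-1}$, a direct manipulation yields the group-level identity
\[\tau^\theta(\varphi\psi)=\tau^\theta(\varphi)\cdot\bigl(|\varphi|\circ\tau^\theta(\psi)\circ|\varphi|^{-1}\bigr).\]
Applying the additivity lemma to the degree-$1$ component and the formula $(|\varphi|\circ f\circ|\varphi|^{-1})_1=\varphi\cdot f_1$ then gives $\tau^\theta_1(\varphi\psi)=\tau^\theta_1(\varphi)+\varphi\cdot\tau^\theta_1(\psi)$, which is exactly the $1$-cocycle condition, so $\tau^\theta_1\in Z^1(\Aut(\gr);\Der^1(LH/I_\gr))$.

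For independence of the cohomology class I would use that $\IAut(\ct/I_\gr)$ acts freely and transitively on $\Theta(\gr,I_\gr)$: given a second expansion $\theta'$, write $\theta'=g\circ\theta$ with $g=\theta'\circ\theta^{-1}\in\IAut(\ct/I_\gr)$. The same insertion trick gives $\tau^{\theta'}(\varphi)=g\cdot\tau^\theta(\varphi)\cdot(|\varphi|g^{-1}|\varphi|^{-1})$, and passing to degree $1$ with $(g^{-1})_1=-g_1$ produces $\tau^{\theta'}_1(\varphi)-\tau^\theta_1(\varphi)=g_1-\varphi\cdot g_1$. The right-hand side is (up to sign) the coboundary of the $0$-cochain $g_1\in\Der^1(LH/I_\gr)$, so $[\tau^{\theta'}_1]=[\tau^\theta_1]$ in $H^1(\Aut(\gr);\Der^1(LH/I_\gr))$, as required.
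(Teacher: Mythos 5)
Your proof is correct, and it splits cleanly into two halves relative to the paper. For the cocycle condition you do exactly what the paper does: derive the multiplicative identity $\tau^\theta(\varphi\psi)=\tau^\theta(\varphi)\,|\varphi|\tau^\theta(\psi)|\varphi|^{-1}$ and pass to degree $1$; your only addition is to make explicit the additivity lemma $(f\circ g)_1=f_1+g_1$ (and $(|\varphi|f|\varphi|^{-1})_1=\varphi\cdot f_1$), which the paper uses silently. For independence of $\theta$, however, your route is genuinely different. The paper works with the explicit grading of the expansions themselves: it invokes Proposition \ref{prop:emb} to see that $\theta_2'=\theta_2$ on $\Gamma_2\gr$, defines the $0$-cochain $F([x]):=\theta_2'(x)-\theta_2(x)$ directly, and then uses the formula $\tau^\theta_1(\varphi)([x])=\theta_2(x)-|\varphi|\theta_2(\varphi^{-1}(x))$ to conclude $\tau^\theta_1-\tau^{\theta'}_1=dF$. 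You instead exploit the torsor structure: writing $\theta'=g\circ\theta$ with $g=\theta'\circ\theta^{-1}\in\IAut(\ct/I_\gr)$, the conjugation identity $\tau^{\theta'}(\varphi)=g\,\tau^\theta(\varphi)\,(|\varphi|g^{-1}|\varphi|^{-1})$ plus the same degree-$1$ extraction yields the coboundary relation with $0$-cochain $g_1$. The two cochains coincide ($g_1([x])=\theta_2'(x)-\theta_2(x)$), but your derivation has two advantages: it avoids the appeal to Proposition \ref{prop:emb} and the well-definedness check for $F$ on $H$ (the degree-$1$ component of an honest automorphism of $\ct/I_\gr$ is automatically a well-defined derivation of the quotient), and it makes the cocycle condition and the independence statement consequences of one and the same manipulation. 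The paper's version, in exchange, gives a concrete formula for the cobounding cochain in terms of the expansions, which is reused in spirit elsewhere (e.g.\ in Proposition \ref{prop:John}). Both are valid; yours is the more structural argument.
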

\begin{proof}
The cocycle condition of $\tau^\theta_1$ is written by
\[\tau^\theta_1(\varphi\psi)=\tau^\theta_1(\varphi)+|\varphi|\tau^\theta_1(\psi)\]
for every $\varphi,\psi\in \Aut(\gr)$. This follows from the formula \[\tau^\theta(\varphi\psi)=\tau^\theta(\varphi)|\varphi|\tau^\theta(\psi)|\varphi|^{-1}.\]

Independency of its cohomology class is proved as follows. Because $\theta_2'(x)=\theta_2(x)$ for $x\in \Gamma_2\gr$ by Prop \ref{prop:emb}, we can define $F\in \Der^1(LH/I_\gr)$ by
\[F([x]):=\theta_2'(x)-\theta_2(x)\]
for $[x]\in H$. By the formula
\[\tau^{\theta}_1(\varphi)([x])=\theta_2(x)-|\varphi|\theta_2(\varphi^{-1}(x))\]
for $[x]\in H$ and $\varphi\in \Aut(\gr)$, we have
\[\tau^{\theta}_1-\tau^{\theta'}_1=dF.\]
Thus $[\tau^{\theta'}_1]=[\tau^\theta_1]\in H^1(\Aut(\gr);\Der^1(LH/I_\gr))$.
\end{proof}

Similar results for the outer isomorphism group also holds:
\begin{prop}\label{prop:out}
{\it For $m\geq1$, the map induced by $\tau^\theta$
\[\grad^m(\tau^\theta):\grad^m(\Out(\gr))\to \grad^m(\Out(\ct/I_\gr))=\ODer^m(LH/I_\gr)\]
for $\theta\in\Th(\gr,I_\gr)$ is identified with the $m$-th Johnson homomorphism $\bar{\tau}_m$ of $\Out(\gr)$. Here the groups $\Out(\gr)$ and $\Out(\ct/I_\gr)$ is filtered by $\{\mathcal{O}_\gr(k)\}_{k=0}^{\infty}$ and $\{\Out^{\geq k}(\ct/I_\gr)\}_{k=0}^\infty$, where \[\Out^{\geq k}(\ct/I_\gr)=\Ima(\Aut^{\geq k}(\ct/I_\gr)\to \Out(\ct/I_\gr)),\]respectively. Furthermore the first Johnson map $\tau^\theta_1$ is a cocycle of $\Out(\gr)$ with coefficient $\ODer^1(LH/I_\gr)$, and the cohomology class \[[\tau^\theta_1]\in H^1(\Out(\gr); \ODer^1(LH/I_\gr))\] does not depend on a choice of $\theta\in\Th(\gr,I_\gr)$. } 
\end{prop}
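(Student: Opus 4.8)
The plan is to deduce each assertion from its $\Aut(\gr)$-counterpart in Propositions \ref{prop:John} and \ref{prop:aut} by pushing everything through the quotient by inner automorphisms. The first step is to identify the filtration on $\Out(\gr)$ as the image of the Andreadakis filtration. Writing $p\colon \Aut(\gr)\to\Out(\gr)$ for the projection, the inclusion $p(\mathcal{A}_\gr(k))\subseteq\mathcal{O}_\gr(k)$ is immediate; conversely, if $[\varphi]\in\mathcal{O}_\gr(k)$ then $\varphi$ agrees with some $\iota_g$ on $\gr/\Gamma_{k+1}\gr$, so $\iota_g^{-1}\varphi\in\mathcal{A}_\gr(k)$ represents $[\varphi]$, giving $p(\mathcal{A}_\gr(k))=\mathcal{O}_\gr(k)$. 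Hence $\mathcal{O}_\gr(k)$ carries the quotient filtration, the induced map $\grad^m(\Aut(\gr))\to\grad^m(\Out(\gr))$ is surjective, and the same reasoning on the target identifies $\grad^m(\Out(\ct/I_\gr))$ with $\ODer^m(LH/I_\gr)$.

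For the identification with $\bar{\tau}_m$, I would invoke the commuting square
\[\xymatrix{\grad^m(\Aut(\gr))\ar[r]^{\grad^m(\tau^\theta)}\ar[d]&\Der^m(LH/I_\gr)\ar[d]\\\grad^m(\Out(\gr))\ar[r]^{\grad^m(\tau^{[\theta]})}&\ODer^m(LH/I_\gr),}\]
whose vertical maps are the surjective quotients just described. By Proposition \ref{prop:John} the top arrow is the Johnson homomorphism $\tau_m$, and $\bar{\tau}_m$ is by construction the map that $\tau_m$ induces on the quotients; surjectivity of the left vertical arrow then forces $\grad^m(\tau^{[\theta]})=\bar{\tau}_m$, and independence of $\theta$ is inherited from Proposition \ref{prop:John}.

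The degree-$1$ cocycle statement first requires that $\tau^\theta_1$ descend to $\Out(\gr)$. The key computation is $\tau^\theta(\iota_g)=\theta\iota_g\theta^{-1}|\iota_g|^{-1}=\iota_{\theta(g)}$, using $|\iota_g|=\mathrm{id}_H$; since $\theta(g)=1+[g]+\cdots$ is group-like, extracting the degree-$1$ part gives $\tau^\theta_1(\iota_g)=\ad([g])\in\IDer^1(LH/I_\gr)$. Feeding this into the $\Aut(\gr)$-cocycle identity of Proposition \ref{prop:aut} and using that inner automorphisms act trivially through $|\cdot|$, one gets $\tau^\theta_1(\iota_g\varphi)\equiv\tau^\theta_1(\varphi)$ and $\tau^\theta_1(\varphi\iota_g)\equiv\tau^\theta_1(\varphi)$ modulo $\IDer^1$, so $\tau^\theta_1\bmod\IDer^1$ is a well-defined cochain $\tau^{[\theta]}_1$ on $\Out(\gr)$. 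Reducing the identity $\tau^\theta_1(\varphi\psi)=\tau^\theta_1(\varphi)+|\varphi|\tau^\theta_1(\psi)$ modulo $\IDer^1$ then yields the cocycle condition, the coefficient $\ODer^1(LH/I_\gr)$ being an $\Out(\gr)$-module because the $|\cdot|$-action factors through $\GL(\gr)$ and preserves $\IDer^1$.

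Finally, for independence of the cohomology class I would reduce the coboundary relation $\tau^\theta_1-\tau^{\theta'}_1=dF$ of Proposition \ref{prop:aut} modulo $\IDer^1$; since $\Der^1(LH/I_\gr)\to\ODer^1(LH/I_\gr)$ is $\Out(\gr)$-equivariant, the image $\bar{F}$ of the $0$-cochain $F$ satisfies $\tau^{[\theta]}_1-\tau^{[\theta']}_1=d\bar{F}$, giving $[\tau^{[\theta]}_1]=[\tau^{[\theta']}_1]$ in $H^1(\Out(\gr);\ODer^1(LH/I_\gr))$. I expect the only genuinely non-formal point to be the computation $\tau^\theta(\iota_g)=\iota_{\theta(g)}$ that pins down $\tau^\theta_1(\iota_g)=\ad([g])$ and thereby the well-definedness of $\tau^{[\theta]}_1$; once the filtrations are matched, everything else is a routine transfer of the $\Aut(\gr)$-level statements across the quotient maps.
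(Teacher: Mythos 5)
Your proof is correct, and it is essentially the argument the paper intends: Proposition \ref{prop:out} is stated without proof (``similar results for the outer isomorphism group also holds''), the implicit argument being exactly your descent of Propositions \ref{prop:John} and \ref{prop:aut} through the quotients by inner automorphisms, using that the filtrations $\mathcal{O}_\gr(k)$ and $\Out^{\geq k}(\ct/I_\gr)$ are the images of the filtrations upstairs. Your key computation $\tau^\theta(\iota_g)=\iota_{\theta(g)}$, hence $\tau^\theta_1(\iota_g)=\ad([g])\in\IDer^1(LH/I_\gr)$, is precisely the point the paper relies on implicitly when it defines $\tau^{[\theta]}$ and $\tau^{[\theta]}_1$ in Subsection 2.3, and it also supplies the kernel identification behind $\grad^m(\Out(\ct/I_\gr))=\ODer^m(LH/I_\gr)$.
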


\section{Characteristic classes}

For an oriented closed manifold $X$, we set the 
fundamental group $\gr =\pi_1 (X)$ and the first homology group 
$H=H_1 (X;\R )$. For the purpose, we recall the result of K. T. Chen.
\begin{defi}[\cite{C1,C2}]Let $X$ be a manifold. We denote the suspension of  $H_+(X;\R):=\bigoplus_{p>0}H_p(X;\R)$ by $H_+:=H_+(X;\R)[1]$. The completed tensor algebra $T:=\hat{T}H_+$ of the suspension $H_+$ is a (completion of) graded algebra. A pair $(\omega,\delta)$ satisfying the following conditions is a {\bf formal homology connection}:
\begin{enumerate}
\item a $T$-coefficient differential form $\omega\in A^*(X)\otimes T$ is described by 
\[\omega=\sum_{k=1}^{\infty}\sum_{i_1,\dots,i_k}\omega_{i_1\cdots i_k}X_{i_1}\cdots X_{i_k},\]
where $X_1,\dots,X_n$ is a homogeneous basis of $H_+$ and the differential form $\omega_{i_1\cdots i_k}\in A^*(X)$ is a $(\deg X_{i_1}+\cdots+\deg X_{i_k}+1)$-form and they satisfy
\[\int_{X_p}\omega_p=1.\]
\item  a linear map $\delta:T\to T$ is a differential with degree $-1$ of the graded algebra $T$ such that 
\[\delta(H_+)\subset \prod_{q=2}^\infty H_+^{\otimes q}.\]
\item the flatness condition $\delta\omega=d\omega-\epsilon(\omega)\wedge \omega$ holds, where $\epsilon:A^*(X)\to A^*(X)$ is defined by $\epsilon(\alpha)=(-1)^p\alpha$ for $\alpha\in A^p(X)$, and $\delta$ and $\epsilon$ are extended onto $A^*(X)\otimes T$.
\end{enumerate}
\end{defi} 

Let $(\omega,\delta)$ be a formal homology connection of $X$. Then we can obtain the chain map $C_*(\Omega X;\R)\to T$ from the cubical chain complex of the loop space $\Omega X$ of $X$ to the algebra $T$ defined by the iterated integral
\[\sigma\mapsto \sum_{n=0}^\infty\int_\sigma\underbrace{\omega\cdots \omega}_n.\]
Furthermore, by the result of Chen, the homomorphism $H_0(\Omega X;\R)\to H_0(T,\delta)$ induces
a Hopf algebra isomorphism $\theta_\omega:\hat{\R}\gr\simeq \hat{T}H/I_\omega$ where $I_\omega:=\delta(H_2(X;\R)[1])$. It is called Chen expansion. In our notation, $I_\omega$ is a decomposable ideal and $\theta_\omega$ is an $I_\omega$-expansion. 

\begin{thm}[\cite{C1, C2}]
{\it Let $(X,g)$ be an oriented closed Riemannian manifold and 
\[A^*(X)=\mathcal{H}_g\oplus dA^*(X)\oplus d^*_gA^*(X)\]
be the Hodge decomposition of $(X,g)$. Here $\mathcal{H}_g$ is the space of harmonic forms and $d_g^*:A^*(X)\to A^*(X)$ is the adjoint operator of $d$ with respect to $g$. Then there exists a unique formal homology connection $(\omega_g,\delta_g)$ such that 
\[\omega_g=\sum_{i=1}^m\omega_iX_i+\sum_{p\geq 2}\sum_{i_1,\dots,i_p}\omega_{i_1\dots i_p}X_{i_1}\cdots X_{i_p}\]
where $X_1,\dots,X_m$ is a basis of $H_+$, $\omega_i\in \mathcal{H}_g$ and $\omega_{i_1\dots i_p}\in d^*_gA^*(X)$. }
\end{thm}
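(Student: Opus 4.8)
The plan is to construct $(\omega_g,\delta_g)$ by induction on the tensor word-length in $T=\hat{T}H_+$, turning the flatness condition into a sequence of elliptic equations on $X$ that are solved by Hodge theory. I write $\omega=\sum_{k\ge1}\omega^{[k]}$, where $\omega^{[k]}=\sum_{|I|=k}\omega_I X_I$ collects the words $X_I=X_{i_1}\cdots X_{i_k}$ of length $k$, and I record $\delta$ by its values on generators, $\delta X_j=\sum_{|J|\ge2}c^j_J X_J$ (so condition (2) will hold automatically). The key observation is that $\delta$ and the quadratic term $\epsilon(\omega)\wedge\omega$ strictly raise word-length, while $d\omega$ preserves it; hence extracting the coefficient of a fixed word $X_K$ with $|K|=k$ from $\delta\omega=d\omega-\epsilon(\omega)\wedge\omega$ yields
\[ d\omega_K=\Phi_K,\qquad \Phi_K:=(\delta\omega)_K+\sum_{K=IJ}(-1)^{|\omega_I|}\,\omega_I\wedge\omega_J, \]
in which $\Phi_K$ depends only on the forms $\omega_I$ with $|I|<k$ and the constants $c^i_J$ with $|J|<k$, together with the \emph{new} length-$k$ unknowns $c^j_K$, which enter through the $|I|=1$ piece $\omega_j\,\delta X_j$. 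This makes the recursion well founded: at level $k$ everything is expressed through data already fixed at levels $<k$, plus the two genuinely new unknowns, the form $\omega_K$ and the constants $c^j_K$.

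For the base case $k=1$ there are no quadratic or $\delta$ contributions, so $d\omega_i=0$; I would take $\omega_i\in\mathcal H_g$ to be the unique harmonic representative normalized by $\int_{X_j}\omega_i=\delta_{ij}$, which forces and fixes the leading term. For $k\ge2$ I split off the harmonic obstruction, writing $\Phi_K=\sum_j c^j_K\,\omega_j+\Psi_K$ with $\Psi_K$ built from levels $<k$. A degree count (using that $\delta$ lowers the internal $T$-degree by one) shows that the only generators $X_j$ allowed to contribute have $\deg\omega_j$ equal to the form-degree $q$ of $\Phi_K$, and that as $j$ ranges over these, the harmonic forms $\omega_j$ range over a basis of $\mathcal H^q_g$. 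Consequently the harmonic part of $\Phi_K$ can be annihilated by one and only one choice of the $c^j_K$, so $\delta$ is \emph{forced} at this level and $\Phi_K$ is rendered harmonic-free.

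The step I expect to be the main obstacle is the integrability statement that $\Phi_K$ is \emph{closed}; this is what guarantees that, once its harmonic part is removed, $\Phi_K$ is exact and hence solvable. I would prove $d\Phi_K=0$ by a Bianchi-type induction: differentiating the two pieces of $\Phi_K$, substituting the already-established lower-length relations $d\omega_I=\Phi_I$ ($|I|<k$), and using that $\delta$ is a derivation together with the lower-length part of $\delta^2=0$, so that all terms cancel — this is the formal curvature identity read off in word-length $k$. Granting closedness, a closed form has vanishing coexact component, so the Hodge decomposition gives $\Phi_K=d\bigl(d^*_g G\Phi_K\bigr)$ once its harmonic part is zero, where $G$ is the Green operator; I then set $\omega_K:=d^*_g G\Phi_K$, the unique coexact ($d^*_g A^*(X)$-valued) primitive. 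This simultaneously yields existence and, since at every level both $c^j_K$ and $\omega_K$ were forced, the uniqueness of $(\omega_g,\delta_g)$ of the asserted shape.

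It then remains to assemble the global statement. The containment $\delta(H_+)\subset\prod_{q\ge2}H_+^{\otimes q}$ holds by construction, $\delta$ is a degree $-1$ derivation by fiat, and the length-by-length solution constitutes a genuine formal homology connection because flatness was solved in each word-length; the identity $\delta^2=0$ follows from the same integrability computation that produced closedness of the $\Phi_K$. Finally I would note that every choice made was dictated by the harmonic projection and $d^*_g$, so that the resulting pair depends only on the Hodge decomposition of $(X,g)$, as claimed.
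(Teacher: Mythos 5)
The paper offers no proof of this theorem at all — it is quoted from Chen \cite{C1, C2} — and your proposal is essentially Chen's own argument: induction on tensor word-length, with the new structure constants $c^j_K$ of $\delta$ forced by killing the harmonic part of the obstruction $\Phi_K$ (possible because the degree count puts $\Phi_K$ in form-degree $\deg X_K+2\geq 2$, where the $\omega_j$ span $\mathcal{H}^q_g$), and with $\omega_K:=d^*_gG\Phi_K$ the unique coexact primitive once $\Phi_K$ is shown closed. The only point to organize carefully is the apparent circularity between $d\Phi_K=0$ and $\delta^2=0$: it should be a simultaneous induction in which flatness in word-lengths $\leq k-1$ first yields the length-$\leq k$ part of $\delta^2=0$ (via $d^2\omega=0$ together with the linear independence of harmonic and coexact forms), and this then feeds into the Bianchi-type cancellation proving $d\Phi_K=0$; your sketch leaves this ordering implicit but the scheme is correct and well founded.
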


We denote the group of diffeomorphisms of $X$ preserving the orientation by ${\rm Diff}_+(X)$. For a Riemannian metric $g$ on $X$ and $\varphi\in{\rm Diff}_+(X)$, we define the metric $\varphi_*g$ on $X$ by
\[(\varphi_*g)(u,v):=g(\varphi^*u,\varphi^*v)\]
for cotangent vectors $u,v\in T^*_xX$ and $x\in X$. Then, since 
\[\mathcal{H}_{\varphi_*g}=(\varphi^*)^{-1}(\mathcal{H}_g),\quad d^*_{\varphi_*g}A^*(X)=(\varphi^*)^{-1}(d^*_gA^*(X))\]
for $\varphi\in {\rm Diff}_+(X)$, we have
\[(\omega_{\varphi_*g},\delta_{\varphi_*g})=(((\varphi^*)^{-1}\otimes |\varphi|)(\omega_g),|\varphi|\circ\delta_g\circ|\varphi|^{-1}).\]
Here $|\varphi|$ means the algebra isomorphism $\hat{T}H\to \hat{T}H$ induced by $\varphi\in{\rm Diff}_+(X)$. Thus the corresponding Chen expansions and ideals satisfy
\begin{equation}\theta_{\omega_{\varphi_*g}}=|\varphi|\circ\theta_{\omega_g}\circ \varphi^{-1}_*:\hat{\R}\pi_1(X,\varphi(*))\to \hat{T}H/|\varphi|(I_{\omega_g}),\quad I_{\omega_{\varphi_*g}}=|\varphi|(I_{\omega_g})\label{equiv}\end{equation}
for a diffeomorphism $\varphi$ of $X$ preserving the base point $*$. Here we denote the homomorphism $\hat{T}H/I_{\omega_g}\to \hat{T}H/|\varphi|(I_{\omega_g})$ induced by $|\varphi|:\hat{T}H\to \hat{T}H$ by the same letter $|\varphi|$.

Based on these considerations, we describe characteristic classes of fiber bundles using expansion spaces as follows: let $E\to B$ be an oriented fiber bundle whose fiber $X$ is an oriented closed manifold and set $\pi=\pi_1(X)$. We choose a fiberwise metric $g_{E/B}$ on $E \to B$. Take an open covering $\{U_i\}_i$ of $B$ and local trivializations $\varphi_i:\pi^{-1}(U_i)\simeq U_i\times X$ of $E$. Then $g_{E/B}$ defines fiberwise metrics $g_i$ of trivial bundles $U_i\times X\to U_i$. The map $\theta_i:U_i\to \Th(\gr)$ can be defined by $x\mapsto [\theta_{\omega_{g_i(x)}}]$.
For any differential form $\alpha\in A^*(\Th(\gr))^{\Out(\gr)}$, 
\[\theta_i^*\alpha=\theta_j^*\alpha\]
holds on $U_i\cap U_j$ because the correspondence from metrics to Chen expansions is ${\rm Diff}_+(X)$-equivariant from the equations (\ref{equiv}). (It also means $\theta_i^*\alpha$ is independent of a choice of local trivializations.) Thus the family $\{\theta_i^*\alpha\}_i$ of differential forms on the open covering defines the differential form $\theta^*\alpha$ on $B$ by gluing. The correspondence $\alpha \mapsto \theta^*\alpha$ is a chain map $A^*(\Th(\gr))^{\Out(\gr)}\to A^*(B)$. Given two distinct open coverings, we can prove that the maps $A^*(\Th(\gr))^{\Out(\gr)}\to A^*(B)$ for two coverings are equal by taking a refinement of these open coverings. 

Since any two fiberwise metrics $g_0$, $g_1$ of a fiber bundle can be connected by a segment $(1-t)g_0+tg_1$, the chain maps $A^*(\Th(\gr))^{\Out(\gr)}\to A^*(B)$ for distinct metrics are chain homotopy equivalent. Therefore the induced homomorphism 
\[\ch:H^*_{DR}(\Th(\gr))^{\Out(\gr)}\to H^*_{DR}(B)\]
does not depend on a choice of a fiberwise metric. So the following theorem holds:

\begin{thm}\label{thm:map}
{\it For an oriented fiber bundle $E\to B$ whose fiber is $X$, the map $A^*(\Th(\gr))^{\Out(\gr)}\to A^*(B) $ constructed above is a chain map, and the induced map $\Phi_E$ on cohomology groups is independent of a choice of a fiberwise metric.}
\end{thm}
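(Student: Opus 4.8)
The statement gathers together four assertions that are implicit in the construction preceding it: that the locally defined forms $\theta_i^*\alpha$ glue to a global form $\theta^*\alpha\in A^*(B)$, that $\alpha\mapsto\theta^*\alpha$ commutes with the exterior derivative, that the resulting cohomology map is independent of the chosen open covering, and finally that it is independent of the fiberwise metric. The plan is to dispatch the first three quickly and to concentrate on the last, which is the only point carrying genuine content.

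For the gluing, note that on an overlap $U_i\cap U_j$ the two trivializations differ by a transition map into ${\rm Diff}_+(X)$, whose effect on the associated Chen expansions is governed by the equivariance relation (\ref{equiv}); after passing to conjugacy classes in $\Th(\gr)$ this effect is exactly the action of $\Out(\gr)$ on $\Th(\gr)$ defined earlier. Since $\alpha$ is $\Out(\gr)$-invariant, we get $\theta_i^*\alpha=\theta_j^*\alpha$ on $U_i\cap U_j$, so the family $\{\theta_i^*\alpha\}_i$ glues to a well-defined $\theta^*\alpha\in A^*(B)$, and the same computation gives independence of the trivializations. That $\alpha\mapsto\theta^*\alpha$ is a chain map is then immediate from naturality of the exterior derivative under the smooth local maps $\theta_i$, since $d$ preserves $\Out(\gr)$-invariance and commutes both with pullback and with the gluing. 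Independence of the covering follows by comparing any two coverings on a common refinement, on which the two constructions produce identical local forms and hence identical glued forms.

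For the metric, let $g_0,g_1$ be two fiberwise metrics and set $g_t:=(1-t)g_0+tg_1$, which is positive definite, hence a genuine fiberwise metric, for every $t\in[0,1]$. Pulling $E\to B$ back along the projection $B\times[0,1]\to B$ and equipping it with $g_t$, the construction produces on each $U_i\times[0,1]$ a form, and the equivariance argument above shows that these again glue to a global $\Theta^*\alpha\in A^*(B\times[0,1])$ whose restriction to $B\times\{0\}$ is the glued form $\theta^{(0)*}\alpha$ for $g_0$ and whose restriction to $B\times\{1\}$ is $\theta^{(1)*}\alpha$ for $g_1$. Define $K(\alpha):=\int_{[0,1]}\Theta^*\alpha$ by integration along the $[0,1]$-fibre (this is again well-defined on $B$ by the same gluing). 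The classical de Rham homotopy formula, applied after pullback on the finite-dimensional manifold $B\times[0,1]$, then yields
\[ \theta^{(1)*}\alpha-\theta^{(0)*}\alpha = d\,K(\alpha)+K(d\alpha) \]
on $A^*(\Th(\gr))^{\Out(\gr)}$, exhibiting the two chain maps as chain homotopic. Hence they induce the same map on cohomology, and $\Phi_E$ is independent of the fiberwise metric.

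The essential point, and the one requiring care, is that the assignment sending a fiberwise metric to its harmonic formal homology connection, and thence to its Chen expansion $[\theta_{\omega_g}]\in\Th(\gr)$, is \emph{smooth} in the metric. This is what makes the local maps $x\mapsto[\theta_{\omega_{g_i(x)}}]$ smooth (so that $\theta_i^*\alpha$ is a genuine differential form) and, in the last step, makes $\Theta^*\alpha$ a smooth form on $B\times[0,1]$ to which the homotopy formula applies. I expect this to be the main obstacle: it is not the formal homotopy argument but the analytic input that the Hodge decomposition—the harmonic projection and the Green operator—depends smoothly on $g$, which through the inductive construction of $(\omega_g,\delta_g)$ in Chen's theorem forces smooth dependence of the whole connection, and hence of the Chen expansion, on the metric.
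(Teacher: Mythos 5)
Your proposal is correct and follows essentially the same route as the paper: local maps to $\Th(\gr)$ induced by the fiberwise metric, gluing via the equivariance relation (\ref{equiv}) together with $\Out(\gr)$-invariance of $\alpha$, covering independence by passing to a common refinement, and metric independence via the linear segment $(1-t)g_0+tg_1$. Your explicit chain homotopy (integrating the glued form over the $[0,1]$ factor) and your flagging of the smooth dependence of the Chen expansion on the metric only make precise two points the paper leaves implicit.
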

\begin{rem}\label{rem:map}
Let $\mathcal{G}$ be the structure group of $E\to B$ and we set $\tilde{S}:=\Ima(\mathcal{G}\to \Out(\gr))$. We obtain the map
\[H^*_{DR}(\Th(\gr))^{\tilde{S}}\to  H^*_{DR}(\Th(\gr))^{\Out(\gr)}\overset{\ch}\to H^*_{DR}(B).\]
We also denote this map by $\Phi_E$.
\end{rem}
\subsection{Homologically trivial bundles}
In this subsection we assume that a fiber bundle $E\to B$ with fiber $X$ has the structure group
\[\mathcal{T}(X):=\Ker(\Diff_+(X)\to \GL(H)).\]
Fix a decomposable ideal $I_0$ which satisfies $\Theta(\gr,I_0)\neq\emptyset$. We choose a fiberwise metric $g$ of $E\to B$. Since the structure group of $E\to B$ is $\mathcal{T}(X)$, the correspondence of ideals by $g$ gives a map $q:B\to\mathcal{I}(\gr)$. Because the topological group $\IOut_{I_0}(\ct)$ is contractible, the pullback $q^*\IOut(\ct)\to B$ of the principal $\IOut_{I_0}(\ct)$-bundle $\IOut(\ct)\to \mathcal{I}(\gr)$ is trivial. Taking a trivialization of the principal bundle, we get the $\IOut(\gr)$-equivariant map  
\[s:q^*\Th(\gr)=q^*\IOut(\ct)\times_{\IOut_{I_0}(\ct)}\Th(\gr,I_0)\simeq B\times \Th(\gr,I_0) \to \Th(\gr,I_0).\]
Thus we can obtain the characteristic map
\[H_{DR}^*(\Th(\gr,I_0))^{\IOut(\gr)}\overset{s^*}\to H^*_{DR}(q^*\Th(\gr))^{\IOut(\pi)}\overset{\Phi_E}\to H_{DR}^*(B).\]
where $\IOut(\pi):=\Ker (\Out(\pi)\to \GL(H))$. This map does not depend on the choice of a fiberwise metric and a trivialization. (Independency of a trivialization of $q^*\IOut(\ct)\to B$ comes from contractiblity of $\IOut_{I_0}(\ct)$.)
\begin{rem}
We remark that the map $\Phi_E$ of Theorem \ref{thm:map} and Remark \ref{rem:map} factors the natural maps \[H^*_{DR}(\Th(\gr))^{\IOut(\gr)}\to H^*_{DR}(q^*\Th(\gr))^{\IOut(\gr)}\to H^*_{DR}(B)\]
by construction. The map $H^*_{DR}(q^*\Th(\gr))^{\IOut(\gr)}\to H^*_{DR}(B)$ is also regarded as $\Phi_E$.
\end{rem}

\subsection{The fiber which has a homogeneous ideal and a splitting}
If the fundamental group $\gr=\pi_1(X)$ of fiber $X$ satisfies some conditions, we can obtain stronger results. In this subsection we assume that the fundamental group $\gr=\pi_1(X)$ of fiber $X$ has the homogeneous ideal $I_\pi$ and there exists a splitting
\[\Der^+(\ct)=V\oplus \Der^+_{I_\gr}(\ct)\]
as graded $S$-vector spaces. Here a group $S$ is the image of a structure group of $E\to B$ in $\GL(\gr)$. 
\begin{ex}
If $\pi$ is a free group, we have $I_\gr=0$. So $\pi$ clearly satisfies the condition above for any $S\subset \GL(\pi)$. For example, the fundamental group of a manifold whose second Betti number is zero is a free group.
\end{ex}
\begin{ex}
We consider the case of $X=\Sigma_g$, which is an oriented surface of genus $g\geq1$. Then $\pi=\pi_1(X)$ has the homogeneous ideal $I_\pi=(\omega)$. Here we denote the intersection form of $\Sigma_g$ by
\[\omega=\sum_{i=1}^g[X_i,X_{i+g}]\in L_2,\]
where $X_1,\dots,X_{2g}$ is a symplectic basis of $H$. We set $S=\Ima(\Diff_+(\Sigma_g)\to \GL(\gr))=\Sp(H_1(\Sigma_g;\Z),\omega)\subset \Sp(H,\omega)$. Since all finite dimensional representations of $\Sp(H,\omega)$ are completely reducible, $\pi$ satisfies the splitting condition.
\end{ex}

\begin{lem}\label{lem:contractible}
{\it The $\GL(\gr)$-equivariant principal $\IAut_{I_\gr}(\ct)$-bundle $\IAut(\ct)\to \mathcal{I}(\gr)$ is $S$-equivariant trivial.}
\end{lem}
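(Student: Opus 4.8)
The plan is to exhibit an explicit $S$-equivariant section of the projection $p:\IAut(\ct)\to\mathcal{I}(\gr)$, $p(f)=f(I_\gr)$; since an $S$-equivariant section of a principal bundle carrying a compatible $S$-action is the same data as an $S$-equivariant trivialization, this suffices. Recall that the fiber of $p$ over $I_\gr$ is the stabilizer $\IAut_{I_\gr}(\ct)$, that $\GL(\gr)$ acts on the total space by conjugation $\psi\cdot f=|\psi|\circ f\circ|\psi|^{-1}$ and on the base by $\psi\cdot I=|\psi|(I)$, and that the restriction of this action to $S$ fixes $I_\gr$, because $S$ is a quotient of a group mapping to $\Out(\gr)$ and $|\varphi|(I_\gr)=I_\gr$ for every $\varphi\in\Out(\gr)$ by uniqueness of $I_\gr$. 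Consequently $\Der^+_{I_\gr}(\ct)$ is an $S$-submodule of $\Der^+(\ct)$, so by hypothesis we may fix the graded $S$-invariant complement $V$ with $\Der^+(\ct)=V\oplus\Der^+_{I_\gr}(\ct)$.

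The key step is to show that the composite
\[\Phi:V\xrightarrow{\ \exp\ }\IAut(\ct)\xrightarrow{\ p\ }\mathcal{I}(\gr),\qquad \Phi(v)=\exp(v)(I_\gr),\]
is a bijection; equivalently, that every $f\in\IAut(\ct)$ factors uniquely as $f=\exp(v)\circ h$ with $v\in V$ and $h\in\IAut_{I_\gr}(\ct)$. Here I use that $\exp$ is a bijection $\Der^+(\ct)\to\IAut(\ct)$ restricting to a bijection $\Der^+_{I_\gr}(\ct)\to\IAut_{I_\gr}(\ct)$, since $X(I_\gr)\subset I_\gr$ is equivalent to $\exp(X)(I_\gr)=I_\gr$. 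I would prove the factorization by induction along the filtration $\{\Aut^{\geq k}(\ct)\}$, whose successive quotients $\IAut(\ct)/\Aut^{\geq n+1}(\ct)$ are finite-dimensional unipotent groups inheriting the grading decomposition. At level one the group is abelian and $\Der^1=V_1\oplus\Der^1_{I_\gr}$ gives existence and uniqueness at once; passing from level $n$ to $n+1$, the ambiguity lies in $\Der^n$, which is central modulo $\Aut^{\geq n+1}(\ct)$ because $[\Der^n,\Der^m]\subset\Der^{n+m}$, so the graded splitting $\Der^n=V_n\oplus\Der^n_{I_\gr}$ lets one adjust the $V$-factor and the $\IAut_{I_\gr}(\ct)$-factor independently. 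Taking the projective limit yields the unique factorization, and hence defines the section $\sigma:\mathcal{I}(\gr)\to\IAut(\ct)$ by $\sigma(\Phi(v))=\exp(v)$.

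Finally I would verify $S$-equivariance of $\sigma$, which is where the $S$-stability of $V$ enters. For $\psi\in S$ and $I=\exp(v)(I_\gr)$ with $v\in V$, naturality of the exponential under conjugation gives
\[\psi\cdot I=|\psi|\exp(v)|\psi|^{-1}(I_\gr)=\exp\bigl(|\psi|\,v\,|\psi|^{-1}\bigr)(I_\gr)=\exp(\psi\cdot v)(I_\gr),\]
and $\psi\cdot v\in V$ because $V$ is $S$-invariant; hence $\sigma(\psi\cdot I)=\exp(\psi\cdot v)=|\psi|\exp(v)|\psi|^{-1}=\psi\cdot\sigma(I)$, so $\sigma$ intertwines the two $S$-actions. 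This produces the $S$-equivariant trivialization $\IAut(\ct)\cong\mathcal{I}(\gr)\times\IAut_{I_\gr}(\ct)$. I expect the main obstacle to be the unique-factorization claim in the pro-unipotent limit: one must confirm that the degreewise splitting separates the $V$- and $\IAut_{I_\gr}(\ct)$-components consistently through the inductive step, controlling the Baker--Campbell--Hausdorff correction terms, which are harmless precisely because $\Der^n$ is central at each stage; the equivariance is then a purely formal consequence of the $S$-invariance of $V$.
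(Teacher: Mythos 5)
Your proposal is correct in substance and ends up building exactly the same section as the paper --- namely $\mathcal{I}(\gr)\simeq V\xrightarrow{\exp}\IAut(\ct)$, using the $S$-invariant graded complement $V$ and equivariance of $\exp$ under conjugation --- but your proof of the key claim, that $v\mapsto\exp(v)(I_\gr)$ is invertible, takes a genuinely different route. The paper argues differential-topologically at each finite level $\IAut^{\leq k}(\ct)$: the map $X+Y\mapsto\exp X\cdot\exp Y$ is a local diffeomorphism at $0$ by the inverse function theorem, and the grading-rescaling homomorphisms $w_t(X)=t^kX$ on $\Der^k(LH)$, with group version $F_t(\exp X)=\exp w_t(X)$, propagate this local statement to a global diffeomorphism $V^{\leq k}\to\mathcal{I}(\gr)_k$, after which one passes to the inverse limit. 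You instead prove the unique factorization $\IAut(\ct)=\exp(V)\cdot\IAut_{I_\gr}(\ct)$ purely algebraically by induction along the filtration $\{\Aut^{\geq n}(\ct)\}$, using centrality of the graded pieces to tame the BCH corrections; this is more elementary (no inverse function theorem), and the details do go through, with the remark that your uniqueness step silently uses homogeneity of $I_\gr$ in the same place the paper does (homogeneity is what guarantees that the lowest-degree component of $\log h$, for $h\in\IAut_{I_\gr}(\ct)$, lies in $\Der^+_{I_\gr}(\ct)$ and hence meets $V$ trivially; in the paper it is what lets $F_t$ preserve $\IAut_{I_\gr}(\ct)$). The one caveat is that your argument, as stated, produces only a set-theoretic bijection and hence a set-theoretic section, whereas ``trivial'' must be read in the smooth (pro-finite-dimensional) category: the lemma is subsequently used to get an $\tilde{S}$-equivariant diffeomorphism $\Th(\gr)\simeq\mathcal{I}(\gr)\times\Th(\gr,I_\gr)$ and an equivariant homotopy equivalence, for which continuity/smoothness of the trivialization is essential. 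This is fixable inside your framework --- at each finite level the factorization $f\mapsto(v,h)$ is given by graded projections composed with BCH polynomials, hence is a polynomial (in particular smooth) map compatible with the inverse system --- but the point needs to be stated; the paper's inverse-function-theorem-plus-scaling argument yields the diffeomorphism property for free.
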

\begin{proof}
Let $t$ be a real number. For any $X\in \Der^k(LH)$, we define the map $w_t:\Der^+(\ct)\to \Der^+(\ct)$ by $w_t(X)=t^kX$. If $X\in \Der^k(LH)$ and $Y\in \Der^l(LH)$, we have
\[[w_t(X),w_t(Y)]=[t^kX,t^lY]=t^{k+l}[X,Y]=w_t([X,Y]).\]
So $w_t$ is a Lie algebra homomorphism. We define $F_t:\IAut(\ct)\to \IAut(\ct)$ by \[F_t(\exp X)=\exp w_t(X).\]
Since $w_t$ is a Lie algebra homomorphism, $F_t$ is a group homomorphism. And $F_t$ is commutative with the action of $\GL(H)$ on $\IAut(\ct)$ since $w_t$ preserving the degree of derivations. Furthermore, since $I_\gr$ is a homogeneous ideal, $F_t$ can be restricted to the $\GL(\gr)$-equivariant map $\IAut_{I_\gr}(\ct)\to\IAut_{I_\gr}(\ct)$ and induce $\mathcal{I}(\gr)\to \mathcal{I}(\gr)$.

We set $\IAut^{\leq k}(\ct):=\IAut(\ct/L_{\geq k+1})$. We also write the corresponding Lie algebras as the same way. Since
\[\IAut(\ct)=\varprojlim_k\IAut^{\leq k}(\ct),\]
\[\mathcal{I}(\gr)=\varprojlim_k\IAut^{\leq k}(\ct)/\IAut^{\leq k}_{I_\gr}(\ct).\]
We set $\mathcal{I}(\gr)_k:=\IAut^{\leq k}(\ct)/\IAut_{I_\gr}^{\leq k}(\ct)$. 

By assumption of a splitting
\[\Der^+(\ct)=V\oplus \Der^+_{I_\gr}(\ct),\]
a splitting
\[\Der^{\leq k}(\ct)=V^{\leq k}\oplus \Der^{\leq k}_{I_\gr}(\ct)\]
is induced. Here the superscript $\leq k$ of graded vector spaces means the degree $\leq k$ part.

The map $\Der^{\leq k}(\ct)\to \IAut^{\leq k}(\ct)$ defined by 
\[X+Y\mapsto \exp X\cdot \exp Y\]
is a local diffeomorphism at 0. Therefore so is the map $c:V^{\leq k}\to \mathcal{I}(\gr)_k$ induced by the map above. It gives a diffeomorphism $c:U\to O$ of a neighborhood $U$ of $0$ and a neighborhood $O$ of $[id]$. Then, for all $t>0$ the diagram
\[\xymatrix{U\ar[d]^{w_t}\ar[r]^c&O\ar[d]^{F_t}\\w_t(U)\ar[r]^c&F_t(O)}\]
commutes. Therefore, the restriction $c:w_t(U)\to F_t(U)$ is also a diffeomorphism. Thus
\[c:V^{\leq k}=\bigcup_tw_t(U)\to \bigcup_tF_t(O)=\mathcal{I}(\gr)_k\]
is a global diffeomorphism. The inverse limit 
\[V\to \mathcal{I}(\gr)\]
of the maps is also a diffeomorphism. We obtain a section
\[\mathcal{I}(\gr)\simeq V\overset{\exp}{\to} \IAut(\ct)\]
of the $S$-equivariant principal $\IAut_{I_\gr}(\ct)$-bundle $\IAut(\ct)\to\mathcal{I}(\gr)$.
\end{proof}

Let $\tilde{S}$ be the image of a structure group of $E\to B$ in $\Out(\gr)$. From Lemma \ref{lem:contractible}, the $\GL(\gr)$-equivariant principal $\IOut_{I_\gr}(\ct)$-bundle $\IOut(\ct)\to \mathcal{I}(\gr)$ is also $S$-equivariant trivial. Then there exists a $\tilde{S}$-equivariant diffeomorphism
\begin{multline*}\Th(\gr)=\IOut(\ct)\times_{\IOut_{I_\gr}(\ct)}\Th(\gr,I_\gr)\\
\simeq (\mathcal{I}(\gr)\times \IOut_{I_\gr}(\ct))\times_{\IOut_{I_\gr}(\ct)}\Th(\gr,I_\gr)=\mathcal{I}(\gr)\times \Th(\gr,I_\gr).\end{multline*}
Since the map $F_t:\mathcal{I}(\gr)\to \mathcal{I}(\gr)$ defined in the proof of the theorem above is a $\GL(\gr)$-equivariant homotopy, $\mathcal{I}(\gr)$ is $\GL(\gr)$-contractible. Thus we have the following theorem.
\begin{thm}{\it The space $\Th(\gr)$ is $\tilde{S}$-equivariant homotopic to $\Th(\gr,I_\gr)$.}\end{thm}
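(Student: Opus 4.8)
The plan is to read the statement off, almost formally, from two facts already in place: the $S$-equivariant trivialization of the principal bundle from Lemma~\ref{lem:contractible}, and the $\GL(\gr)$-contractibility of $\mathcal{I}(\gr)$ exhibited by the scaling homotopy $F_t$. In one sentence, the trivialization turns $\Th(\gr)$ into a product $\mathcal{I}(\gr)\times\Th(\gr,I_\gr)$ carrying a \emph{product} $\tilde S$-action, and then $F_t$ collapses the base factor $\mathcal{I}(\gr)$ onto the single fixed point $[I_\gr]$, leaving the fiber $\Th(\gr,I_\gr)$.

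First I would descend the section built in Lemma~\ref{lem:contractible} from the bundle $\IAut(\ct)\to\mathcal{I}(\gr)$ to the principal $\IOut_{I_\gr}(\ct)$-bundle $\IOut(\ct)\to\mathcal{I}(\gr)$; this is legitimate since $\Inn(\ct)$ is normal and the maps $F_t$ pass to the quotient, so the resulting section is still $S$-equivariant. A section of a principal bundle is the same datum as an equivariant trivialization, giving $\IOut(\ct)\simeq\mathcal{I}(\gr)\times\IOut_{I_\gr}(\ct)$; substituting this into the associated-bundle description of $\Th(\gr)$ yields the $\tilde S$-equivariant diffeomorphism
\[\Th(\gr)\simeq\mathcal{I}(\gr)\times\Th(\gr,I_\gr)\]
already displayed before the statement. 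The crucial point I would check here is that the $\tilde S$-action on the right-hand side is a genuine product action: on $\mathcal{I}(\gr)$ it is the action of $S$ by $\varphi\cdot I=\varphi(I)$, and on $\Th(\gr,I_\gr)$ it is induced by conjugation $h\mapsto|\varphi|h|\varphi|^{-1}$ of the structure group, a recipe depending only on $\varphi$ and not on the base point $I$. This independence is exactly what equivariance of the section buys us, and it is what makes the final homotopy equivariant.

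Next I would contract the first factor. The map $F_t\colon\mathcal{I}(\gr)\to\mathcal{I}(\gr)$ of Lemma~\ref{lem:contractible} is a $\GL(\gr)$-equivariant homotopy from the identity to the constant map at $[I_\gr]$, because $w_t$ preserves the grading and hence commutes with the $\GL(H)$-action; moreover $[I_\gr]$ is a $\GL(\gr)$-fixed point, since $I_\gr$ is the unique homogeneous decomposable ideal by Proposition~\ref{prop:unique}. Taking the product of $F_t$ with the identity on $\Th(\gr,I_\gr)$ then gives the deformation retraction
\[\big(I,[\theta],t\big)\longmapsto\big(F_t(I),[\theta]\big)\]
of $\mathcal{I}(\gr)\times\Th(\gr,I_\gr)$ onto $\{[I_\gr]\}\times\Th(\gr,I_\gr)=\Th(\gr,I_\gr)$. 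Because the $\tilde S$-action is a product and $F_t$ commutes with the $S$-action on $\mathcal{I}(\gr)$, this homotopy is $\tilde S$-equivariant, and composing it with the equivariant diffeomorphism above proves the theorem.

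The geometric substance, namely the construction of a global section out of the $w_t$-scaling and the inverse-limit diffeomorphism $V\simeq\mathcal{I}(\gr)$, is already done in Lemma~\ref{lem:contractible}. I therefore expect the only real difficulty to be the equivariance bookkeeping: verifying that the section survives the descent to the $\IOut$-bundle as an $S$-equivariant map, that the induced action on $\mathcal{I}(\gr)\times\Th(\gr,I_\gr)$ is an honest product (so that the second factor is not twisted by the base coordinate), and that $F_t$ is $\GL(\gr)$-equivariant with fixed point $[I_\gr]$. Once these are confirmed, the homotopy equivalence is $\tilde S$-equivariant rather than merely an equivalence of underlying spaces, which is the content of the theorem.
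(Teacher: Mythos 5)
Your proposal is correct and follows essentially the same route as the paper: the $S$-equivariant trivialization of $\IOut(\ct)\to\mathcal{I}(\gr)$ obtained from Lemma~\ref{lem:contractible} gives the $\tilde{S}$-equivariant diffeomorphism $\Th(\gr)\simeq\mathcal{I}(\gr)\times\Th(\gr,I_\gr)$, and the $\GL(\gr)$-equivariant scaling homotopy $F_t$ then contracts the factor $\mathcal{I}(\gr)$. Your extra verifications (that the induced action is a genuine product action and that $[I_\gr]$ is a $\GL(\gr)$-fixed point via Proposition~\ref{prop:unique}) are exactly the bookkeeping the paper leaves implicit.
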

Thus the characteristic map $\Phi_E$ of $E\to B$ is described by
\[H^*_{DR}(\Th(\gr,I_\gr))^{\tilde{S}}=H^*_{DR}(\Th(\gr))^{\tilde{S}}\to H^*_{DR}(B).\]

\subsection{Relation between two constructions}

Let $\gr$ be a finitely generated group which has a homogeneous ideal $I_\gr$. We consider the case of $S=1$, $\tilde{S}=\IOut(\gr)$, $I_0=I_\gr$ in Subsection 3.1 and 3.2. We choose a trivialization of $q^*\IOut(\ct)\to B$ which induced by a trivialization of $\IOut(\ct)\to\mathcal{I}(\gr)$ in 2.2. Then the diagram
\[\xymatrix{q^*\Th(\gr)\ar[d]\ar[r]^{\sim\quad\quad} &B\times\Th(\gr,I_\gr)\ar[d]\ar[r]&\Th(\gr,I_\gr)\\
\Th(\gr)\ar[r]^{\sim\quad\quad}&\mathcal{I}(\gr)\times \Th(\gr,I_\gr)\ar[ur]_{\text{h.e.}}&}\]
commutes. So taking $\IOut(\gr)$-invariant de Rham cohomologies, we obtain the diagram
\[\xymatrix{H^*_{DR}(\Th(\gr,I_\gr))^{\IOut(\gr)}\ar[r]\ar[dr]&H^*_{DR}(q^*\Th(\gr))^{\IOut(\gr)}
\ar[r]&H^*_{DR}(B)\\
&H_{DR}^*(\Th(\gr))^{\IOut(\gr)}\ar[u]\ar[ur]&}.\]
Thus the characteristic maps obtained in two ways are equal in common case.

\begin{rem}
In construction of this section, we can obtain characteristic maps of a fiber bundle whose structure group is a subgroup of $\Diff_+(X,*)$ by replacing properly $\Out$, $\Th$, $\ODer^+$,... to $\Aut$, $\Theta$, $\Der^+$,....
\end{rem}

\subsection{Lie algebra cohomology of derivations}
We construct invariant differential forms on $\Th(\pi,I)$ from a flat connection on $\Th(\pi,I)$ and Chevalley-Eilenberg cochains.  

Since the group $\ia$ acts on $\Th(\gr,I)$ freely and transitively, we get the $\de$-coefficient differential form $\eta \in A^1 (\Th (\gr ,I); \de )$ by pulling back the right-invariant Maurer-Cartan form of $\ia$. The form $\eta$ satisfies the equation
\[d\eta+\frac12 [\eta,\eta]=0.\]

\begin{lem}\label{lem:flat}{\it Let $\varphi \in \Out(\pi)$ be an automorphism of $\gr$ which satisfies $|\varphi|(I)=I$. The Maurer-Cartan form $\eta$ on $\Th(\gr,I)$ satisfies 
\[\varphi^*\eta=\Ad(|\varphi|)\eta.\]} \end{lem}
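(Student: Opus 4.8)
The plan is to exploit the free transitive action of $\ia$ on $\Th(\gr,I)$ recalled above: a choice of basepoint $[\theta_0]$ yields a diffeomorphism $\Psi\colon\ia\to\Th(\gr,I)$, $g\mapsto g\cdot[\theta_0]$, intertwining left multiplication on $\ia$ with the $\ia$-action. By construction $\eta$ is characterized by $\Psi^*\eta=\mu$, where $\mu$ is the right-invariant Maurer--Cartan form of $\ia$. The point of using the \emph{right}-invariant form is exactly that $\eta$ is then independent of the basepoint: replacing $[\theta_0]$ by $h\cdot[\theta_0]$ replaces $\Psi$ by $\Psi\circ R_h$, and right-invariance $R_h^*\mu=\mu$ leaves $\Psi^*\eta$ unchanged, so $\eta$ is a well-defined $\de$-valued form on the torsor.

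The key step is an equivariance relation between the $\Out(\gr)$-action and the $\ia$-action. Working with a representative $\tilde f\in\IAut(\ct/I)$ of $f\in\ia$ and $\theta\in\Theta(\gr,I)$ of $[\theta]$, and using $|\varphi|(I)=I$ so that $\varphi$ preserves $\Th(\gr,I)$, I would factor
\[|\varphi|\circ\tilde f\circ\theta\circ\varphi^{-1}=\bigl(|\varphi|\circ\tilde f\circ|\varphi|^{-1}\bigr)\circ\bigl(|\varphi|\circ\theta\circ\varphi^{-1}\bigr).\]
The right-hand factor represents $\varphi\cdot[\theta]$, while the left-hand factor is $\Ad(|\varphi|)(\tilde f)$, which lies in $\IAut(\ct/I)$ since $|\varphi|$ preserves the grading and the ideal $I$. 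Passing to conjugacy classes—using $|\varphi|\iota_a|\varphi|^{-1}=\iota_{|\varphi|(a)}$, so that conjugation by $|\varphi|$ preserves $\Inn(\ct/I)$—this descends to
\[\varphi\cdot(f\cdot[\theta])=\bigl(\Ad(|\varphi|)(f)\bigr)\cdot(\varphi\cdot[\theta]).\]

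With this relation I would translate the $\varphi$-action through $\Psi$. Writing $\varphi\cdot[\theta_0]=c\cdot[\theta_0]$ for the unique $c\in\ia$ given by transitivity, the equivariance shows $\Psi^{-1}\circ\varphi\circ\Psi=R_c\circ\alpha$, where $\alpha=\Ad(|\varphi|)$ is the conjugation automorphism of $\ia$ and $R_c$ is right translation. Hence
\[\Psi^*(\varphi^*\eta)=(R_c\circ\alpha)^*\mu=\alpha^*R_c^*\mu=\alpha^*\mu,\]
using $R_c^*\mu=\mu$. Finally, for any automorphism $\alpha$ of $\ia$ the right-invariant Maurer--Cartan form obeys $\alpha^*\mu=(d\alpha_e)\circ\mu$; since $d\alpha_e=\Ad(|\varphi|)$ on $\de$, the right side equals $\Ad(|\varphi|)\mu=\Psi^*(\Ad(|\varphi|)\eta)$. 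Injectivity of $\Psi^*$ then yields $\varphi^*\eta=\Ad(|\varphi|)\eta$.

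The main obstacle I anticipate is the bookkeeping in the equivariance step rather than any analytic difficulty: one must verify both that $\varphi$ genuinely preserves $\Th(\gr,I)$ (this is where $|\varphi|(I)=I$ enters) and that conjugation by $|\varphi|$—even though $|\varphi|$ itself is not a positive automorphism and lies outside $\ia$—restricts to a well-defined automorphism $\Ad(|\varphi|)$ of $\ia$ and of $\de$, and descends consistently from $\IAut(\ct/I)$, $\Theta(\gr,I)$ to $\IOut(\ct/I)$, $\Th(\gr,I)$. Once this is settled, the remainder is the standard transformation law of the right-invariant Maurer--Cartan form under right translations and under group automorphisms.
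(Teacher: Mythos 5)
Your proof is correct and takes essentially the same route as the paper's: the paper's map $q_{\theta_0}\colon\Th(\gr,I)\to\ia$, $\theta\mapsto\theta\circ\theta_0^{-1}$, is precisely the inverse of your $\Psi$, and its intertwining identity $q_{\theta_0}\circ\varphi=\Ad(|\varphi|)\circ q_{\varphi\cdot\theta_0}$ packages exactly your decomposition $\Psi^{-1}\circ\varphi\circ\Psi=R_c\circ\alpha$ together with basepoint-independence (equivalently, right-invariance of $\mu$). Both arguments then finish with the same transformation law of the right-invariant Maurer--Cartan form under the conjugation automorphism, namely $\Ad(|\varphi|)^*\tilde{\eta}=\Ad(|\varphi|)\,\tilde{\eta}$.
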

\begin{proof}
For $\theta_0\in \Th(\gr,I)$, we denote $q_{\theta_0}:\Th(\pi,I)\to\ia$ by $\theta\mapsto \theta\circ\theta_0^{-1}$ and the Maurer-Cartan form on $\ia$ by \[\tilde{\eta}\in A^*(\ia;\de).\] Then $\eta=q^*_{\theta_0}\tilde{\eta}$ and it is independent of a choice of $\theta_0$. Since $q_{\theta_0}\circ\varphi=\Ad(|\varphi|)\circ q_{\varphi\cdot\theta_0}$, 
\[\varphi^*\eta=(q_{\theta_0}\circ\varphi)^*\tilde{\eta}=q_{\varphi\cdot\theta_0}^*
\Ad(|\varphi|)^*\tilde{\eta}=\Ad(|\varphi|) q_{\varphi\cdot\theta_0}^*\tilde{\eta}=\Ad(|\varphi|)\eta.\]
Thus we complete the proof.
\end{proof}
From Lemma \ref{lem:flat}, the map $C^*(\de)\to A^*(\Th(\gr,I))^{\IOut(\gr)}$ can be defined by
\[c\mapsto c(\eta^m)\]
for any $c\in C^m(\de)^{\GL(\gr)}$. Here the power $\eta^m$ of $\eta$ is defined by the product 
\begin{multline*}(A^*(X)\otimes\de)^{\otimes m}\\
= A^*(X)^{\otimes m}\otimes \de^{\otimes m}\to A^*(X)\otimes \de^{\otimes m}\end{multline*}
which is the wedge product with respect to $A^*(X)$-components. We set
\[\eta=\sum\eta_\lambda D_\lambda\]
using  a (topological) basis $\{D_\lambda\}_{\lambda\in \Lambda}$ of $\de$ and a well-ordered set $\Lambda$. Then we can write 
\[\eta^m=\sum_{\lambda_1<\cdots<\lambda_m}\eta_{\lambda_1}\wedge \cdots \wedge \eta_{\lambda_m}D_{\lambda_1}\wedge \cdots\wedge D_{\lambda_m}\in A^*(X)\otimes \bigwedge^m \de.\] Thus we can define the above linear map $C^*(\de)\to A^*(\Th(\gr,I))^{\IOut(\gr)}$. This map is a chain map. In fact, we have
\begin{align*}
d(c(\eta^m))&=\sum_{s=1}^m(-1)^{s-1} c(\underbrace{\eta\cdots\overset{s}{d\eta}\cdots\eta}_m)\\
&=\sum_{s=1}^m\frac{(-1)^{s}}{2} c(\underbrace{\eta\cdots\overset{s}{[\eta,\eta]}\cdots\eta}_m)\\
&=(dc)(\eta^{m+1}).\end{align*}
Combining it with the result in Subsection 3.1, we obtain the following:
\begin{thm}
{\it Let $E\to B$ be a fiber bundle whose fiber is $X$ and structure group is $\mathcal{T}(X)$. Fix a decomposable ideal satisfying $\Th(\gr,I_0)\neq\emptyset$. Then the map
\[H^*(\ODer^+(\ct/I_0))\to H^*_{DR}(\Th(\gr,I_0))^{\IOut(\gr)}\to H^*_{DR}(B)\]
is an invariant of an oriented fiber bundle $E\to B$.}
\end{thm}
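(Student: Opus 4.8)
The plan is to obtain the statement by composing two maps that have already been analyzed, and then to observe that every auxiliary choice entering the construction has been eliminated, so that the composite depends only on the bundle $E \to B$ together with the fixed ideal $I_0$.

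First I would treat the left-hand arrow $H^*(\de) \to H^*_{DR}(\Th(\gr,I_0))^{\IOut(\gr)}$. The linear map $C^*(\de) \to A^*(\Th(\gr,I_0))^{\IOut(\gr)}$ given by $c \mapsto c(\eta^m)$ was just shown to be a chain map, using the Maurer--Cartan equation $d\eta + \frac{1}{2}[\eta,\eta] = 0$; hence it descends to cohomology. The essential point is that this arrow is intrinsic, involving no choices: the form $\eta$ is the pullback of the right-invariant Maurer--Cartan form of $\ia$ under the free transitive action, and this pullback is independent of the base point $\theta_0$ as noted in the proof of Lemma \ref{lem:flat}. I would also record why the image really lies in $\IOut(\gr)$-invariant forms: for $\varphi \in \IOut(\gr) = \Ker(\Out(\gr) \to \GL(H))$, the induced Hopf automorphism $|\varphi|$ fixes $H$, hence is the identity on $\hat{T}H/I_0$, so $\Ad(|\varphi|) = \mathrm{id}$ and Lemma \ref{lem:flat} gives $\varphi^*\eta = \eta$, whence $\varphi^*(c(\eta^m)) = c(\eta^m)$.

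Next I would identify the right-hand arrow $H^*_{DR}(\Th(\gr,I_0))^{\IOut(\gr)} \to H^*_{DR}(B)$ with the characteristic map constructed in Subsection 3.1 for a bundle with structure group $\mathcal{T}(X)$, namely the composite of $s^*$ with $\ch$. The two independence facts needed are exactly the ones already established there: the map does not depend on the fiberwise metric $g$ (two metrics are joined by the segment $(1-t)g_0 + tg_1$, producing a chain homotopy, as in the discussion preceding Theorem \ref{thm:map}), nor on the trivialization $s$ of $q^*\IOut(\ct) \to B$ (this uses contractibility of $\IOut_{I_0}(\ct)$). Thus I may quote these as given.

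Composing the two arrows yields a map $H^*(\de) \to H^*_{DR}(B)$ in which the local trivializations $\varphi_i$, the fiberwise metric $g$, and the bundle trivialization $s$ have all been shown not to affect the outcome, so the composite is determined by $E \to B$ and the fixed $I_0$ alone; this is the asserted invariant. I do not expect a serious obstacle here, since the genuinely delicate independence statements were discharged in Subsection 3.1 and the chain-map property was verified immediately above; the only point demanding care is bookkeeping, namely checking that the two arrows are composable over a common invariance group $\IOut(\gr)$ and that no hidden dependence creeps into the left-hand arrow, both of which follow from the remark that $|\varphi| = \mathrm{id}$ on $\IOut(\gr)$.
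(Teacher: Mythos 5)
Your proposal is correct and is essentially the paper's own proof: the paper obtains the theorem exactly by composing the chain map $c\mapsto c(\eta^m)$ (verified via the Maurer--Cartan equation in Subsection 3.4) with the characteristic map of Subsection 3.1 for bundles with structure group $\mathcal{T}(X)$, quoting the already-established independence of the fiberwise metric and of the trivialization of $q^*\IOut(\ct)\to B$. Your added justification that $|\varphi|=\mathrm{id}$ for $\varphi\in\IOut(\gr)$, so that Lemma \ref{lem:flat} yields $\varphi^*\eta=\eta$ and the forms $c(\eta^m)$ are genuinely $\IOut(\gr)$-invariant, fills in a point the paper leaves implicit but introduces no new argument.
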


If $I_0=I_\gr$ is the homogeneous ideal, the group $\GL(\gr)$ acts on $\ODer^+(\ct/I_\gr)$ by the adjoint action. So we define by $C^*(\ODer^+(\ct/I_\gr))^{\GL(\gr)}$ the chain complex of $\GL(\gr)$-invariant Chevalley-Eilenberg cochains of $\ODer^+(\ct/I_\gr)$ with respect to the action. In the same way as the construction above, we obtain the chain map $C^*(\ODer^+(\ct/I_\gr))^{\GL(\gr)}\to A^*(\Th(\gr,I_\gr))^{\Aut(\gr)}$. Since we have the following theorem using the construction in Subsection 3.2.
\begin{thm}
{\it Let $E\to B$ be a fiber bundle whose fiber is $X$ and structure group is a subgroup $\mathcal{G}$ of the diffeomorphism group $\Diff_+(X)$. We set $\pi=\pi_1(X)$, $\tilde{S}=\Ima(\mathcal{G}\to \Out(\gr))$, and $S=\Ima(\mathcal{G}\to \GL(\gr))$. We suppose the group $\gr$ has the homogeneous ideal $I_\pi$ and there exists a splitting
\[\Der^+(\ct)=V\oplus \Der^+_{I_\gr}(\ct)\]
as graded $S$-vector spaces. Then the map
\[H^*(\ODer^+(\ct/I_\gr))^{S}\to H^*_{DR}(\Th(\gr,I_\gr))^{\tilde{S}}\to H^*_{DR}(B)\]
is an invariant of an oriented fiber bundle $E\to B$.}
\end{thm}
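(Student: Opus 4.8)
The plan is to obtain the asserted map as the composite of the Chevalley--Eilenberg chain map of Subsection 3.4 with the characteristic map $\ch$ of Subsection 3.2, and then to observe that neither factor depends on any of the auxiliary choices. Throughout I use that the homogeneous ideal $I_\gr$ is characteristic, so that $|\varphi|(I_\gr)=I_\gr$ for every $\varphi\in\tilde S$ by Proposition \ref{prop:unique}.

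First I would construct the left-hand map $H^*(\ODer^+(\ct/I_\gr))^{S}\to H^*_{DR}(\Th(\gr,I_\gr))^{\tilde S}$ exactly as in Subsection 3.4, but keeping track of the $S$-action. To a cochain $c\in C^m(\ODer^+(\ct/I_\gr))^{S}$ I assign the form $c(\eta^m)\in A^m(\Th(\gr,I_\gr))$, where $\eta$ is the $\ODer^+(\ct/I_\gr)$-valued Maurer--Cartan form pulled back from $\IOut(\ct/I_\gr)$. For $\varphi\in\tilde S$ one has $\varphi^*\eta=\Ad(|\varphi|)\eta$ by Lemma \ref{lem:flat}, so $\varphi^*(c(\eta^m))=c\bigl((\Ad(|\varphi|)\eta)^{m}\bigr)$, and this equals $c(\eta^m)$ precisely because $c$ is invariant under the adjoint $S$-action; hence $c(\eta^m)$ is $\tilde S$-invariant. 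That $c\mapsto c(\eta^m)$ is a chain map is the computation $d(c(\eta^m))=(dc)(\eta^{m+1})$ carried out in Subsection 3.4, using $d\eta+\frac12[\eta,\eta]=0$. Since $\eta$ is the pullback of the Maurer--Cartan form of $\IOut(\ct/I_\gr)$ and is independent of the base point $\theta_0$, this first map is entirely canonical.

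Next I would identify the right-hand map $H^*_{DR}(\Th(\gr,I_\gr))^{\tilde S}\to H^*_{DR}(B)$ with $\ch$. By the theorem of Subsection 3.2, the splitting hypothesis yields a $\tilde S$-equivariant homotopy equivalence $\Th(\gr)\simeq\Th(\gr,I_\gr)$, so that the restriction to the fiber $i^*\colon H^*_{DR}(\Th(\gr))^{\tilde S}\to H^*_{DR}(\Th(\gr,I_\gr))^{\tilde S}$ is an isomorphism; the desired map is then $\ch\circ (i^*)^{-1}$, with $\ch$ the characteristic map of Theorem \ref{thm:map}. By Theorem \ref{thm:map}, $\ch$ is independent of the fiberwise metric, and by the refinement argument preceding it, independent of the local trivializations of $E$.

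The one point I would then isolate --- and it is the only delicate one --- is that the identification $H^*_{DR}(\Th(\gr,I_\gr))^{\tilde S}\cong H^*_{DR}(\Th(\gr))^{\tilde S}$ does not depend on the $S$-equivariant trivialization of $\IAut(\ct)\to\mathcal{I}(\gr)$ produced in Lemma \ref{lem:contractible}. I would resolve this by noting that the map in play is $i^*$, restriction to the fiber $\Th(\gr,I_\gr)\subset\Th(\gr)$ over the point $I_\gr\in\mathcal{I}(\gr)$, and that this fiber and its inclusion are intrinsic to the bundle $\Th(\gr)\to\mathcal{I}(\gr)$: no trivialization is needed to define $i^*$. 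The trivialization of Lemma \ref{lem:contractible} enters only to certify that $i^*$ is an isomorphism, via the $\GL(\gr)$-contractibility of $\mathcal{I}(\gr)$ witnessed by the homotopy $F_t$. Consequently $(i^*)^{-1}$ is canonical, and the whole composite $H^*(\ODer^+(\ct/I_\gr))^{S}\to H^*_{DR}(\Th(\gr,I_\gr))^{\tilde S}\to H^*_{DR}(B)$ depends only on $E\to B$, which is the assertion.
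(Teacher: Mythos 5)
Your proof is correct and takes essentially the same route as the paper: send $S$-invariant Chevalley--Eilenberg cochains to $\tilde S$-invariant forms on $\Th(\gr,I_\gr)$ via $c\mapsto c(\eta^m)$ using Lemma \ref{lem:flat} together with $|\varphi|(I_\gr)=I_\gr$ from Proposition \ref{prop:unique}, then compose with the characteristic map of Subsection 3.2, which the splitting hypothesis and Lemma \ref{lem:contractible} make available. Your closing observation that the identification $H^*_{DR}(\Th(\gr))^{\tilde S}\cong H^*_{DR}(\Th(\gr,I_\gr))^{\tilde S}$ is inverse to the intrinsic fiber restriction $i^*$ (so that no dependence on the chosen trivialization survives) is a detail the paper leaves implicit, and it is consistent with the paper's construction because the section $\exp\colon V\to\IAut(\ct)$ of Lemma \ref{lem:contractible} is the identity over $I_\gr$, making the fiber inclusion the equivariant homotopy inverse of the projection.
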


\section{Closed surface bundles}

Let $\mathcal{M}_{g}$ be the mapping class group of an oriented closed surface $\Sigma_g$ of genus $g\geq2$ and let $\mathcal{T}_{g}$ be the Teichm\"uler space of genus $g$, which is the space ${\rm Met}_g$ of metrics which has constant curvature $-1$ on $\Sigma_g$ modulo ${\rm Diff}_0(\Sigma_g)$. The quotient orbifold $\mathbb{M}_{g}:=\mathcal{T}_{g}/\mathcal{M}_{g}$ is called the moduli space of genus $g$. Since $\mathcal{T}_{g}$ is contractible, we have the canonical isomorphism
\[H^*(\mathbb{M}_{g};\R)\simeq H^*(\mathcal{M}_{g};\R).\]
We denote the first real homology group $H_1(\Sigma_g;\R)$ of $\Sigma_g$ by $H$ and the intersection form on $\Sigma_g$ by $\omega$. The alternating form $\omega$ can be regarded as an element of $L_2$ by the identity
\[\omega=\sum_{i=1}^g[X_i,X_{i+g}]\]
where $X_1,\dots,X_{2g}$ is any symplectic basis of $H$.

\begin{lem}[\cite{Mor2}]
{\it There exist identifications}
\[\xymatrix{0\ar[r]&\IDer^1(LH/(\omega))\ar@{=}[d]\ar[r]&\Der^1(LH/(\omega))\ar@{=}[d]\ar[r]&\ODer^1(LH/(\omega))\ar@{=}[d]\ar[r]&0\\
0\ar[r]&H\ar[r]^{\cdot\wedge\omega}&\Lambda^3H\ar[r]&\Lambda^3H/H\ar[r]&0.}\]
\end{lem}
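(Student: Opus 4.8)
The plan is to reduce the whole statement to degree-one derivations of the \emph{free} Lie algebra $LH$ and then account for the single quadratic relation $\omega$. First I would identify $\Der^1(LH)=\Hom(H,L_2)\cong H\otimes\Lambda^2 H$, using the nondegeneracy of the intersection form to write $H^*\cong H$ together with $L_2\cong\Lambda^2 H$. The second ingredient is the classical presentation of $L_3$, namely the exact sequence
\[0\to\Lambda^3 H\xrightarrow{\ \iota\ }H\otimes\Lambda^2 H\xrightarrow{\ [\,\cdot\,,\,\cdot\,]\ }L_3\to0,\]
where $\iota$ is antisymmetrization and the right-hand map is the Lie bracket; exactness is checked by a dimension count. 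The crucial point is that, under the identification above, the evaluation map $D\mapsto D(\omega)\in L_3$ coincides up to sign with the bracket $[\,\cdot\,,\,\cdot\,]$. I would verify this on a symplectic basis: expanding $D(\omega)=\sum_i[D X_i,X_{i+g}]+[X_i,D X_{i+g}]$ and using the symplectic duality $\flat\colon H\xrightarrow{\sim}H^*$ collapses the sum into a single bracket $[\flat^{-1}(\cdot),\cdot]$, which is exactly the bracket map after transporting $H^*\otimes\Lambda^2H$ to $H\otimes\Lambda^2H$.

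With this in place I would translate the passage to the quotient into linear algebra. A degree-one derivation of $LH/(\omega)$ is the same datum as a map $\phi\colon H\to L_2/\R\omega$ whose lift $\tilde\phi\colon H\to L_2$ satisfies $\tilde\phi(\omega)\in[\omega,H]=(\omega)\cap L_3$, the condition that the induced derivation of $LH$ preserve the ideal $(\omega)$ in degree three; one checks that this condition is independent of the chosen lift. Identifying evaluation with the bracket, this reads
\[\Der^1(LH/(\omega))\cong[\,\cdot\,,\,\cdot\,]^{-1}([\omega,H])\big/(H\otimes\R\omega).\]
Since $[\,\cdot\,,\,\cdot\,](H\otimes\R\omega)=[\omega,H]$ and $\ker[\,\cdot\,,\,\cdot\,]=\iota(\Lambda^3 H)$, the preimage equals $\iota(\Lambda^3 H)+H\otimes\R\omega$; a short diagram chase---or the dimension count using that $[\omega,\cdot]\colon H\to L_3$ is injective for $g\ge2$---shows $\iota(\Lambda^3 H)\cap(H\otimes\R\omega)=0$, hence $\Der^1(LH/(\omega))\cong\Lambda^3 H$, the middle vertical identification.

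It remains to locate the inner derivations in this picture. For $a\in H$ the inner derivation $\ad(a)$ is the map $x\mapsto[a,x]=a\wedge x$; applying the transport $\flat^{-1}\otimes\mathrm{id}$ and comparing with the expansion of $\iota(a\wedge\omega)$ on the symplectic basis yields the congruence
\[(\flat^{-1}\otimes\mathrm{id})(\ad a)\equiv-\iota(a\wedge\omega)\pmod{H\otimes\R\omega}.\]
Thus under $\Der^1(LH/(\omega))\cong\Lambda^3 H$ the derivation $\ad(a)$ corresponds (up to sign) to $a\wedge\omega$, so $\IDer^1(LH/(\omega))$ is precisely the image of $\cdot\wedge\omega\colon H\to\Lambda^3 H$. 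This map is injective for $g\ge2$ because $\omega$ is nondegenerate (hard Lefschetz in degree one), giving $\IDer^1(LH/(\omega))\cong H$; passing to the quotient gives $\ODer^1(LH/(\omega))\cong\Lambda^3 H/(H\wedge\omega)=\Lambda^3 H/H$, and the two short exact sequences match term by term.

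The main obstacle I anticipate is the crucial step identifying $D\mapsto D(\omega)$ with the Lie bracket while keeping the symplectic duality and the signs straight, together with the transversality $\iota(\Lambda^3 H)\cap(H\otimes\R\omega)=0$. Once these two linear-algebra facts are secured, the inner-derivation comparison and the quotient are purely formal.
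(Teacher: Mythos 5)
Your proposal is correct. Note, however, that the paper itself contains no proof of this lemma at all: it is quoted verbatim from Morita's paper [M2] (\cite{Mor2}), so there is no internal argument to compare against. What you have written is, in essence, a reconstruction of the standard (Morita-style) argument from the cited source: symplectic duality $H^*\cong H$, the classical exact sequence $0\to\Lambda^3H\to H\otimes\Lambda^2H\to L_3\to 0$ (antisymmetrization followed by the bracket), and the key observation that after transporting $\Hom(H,L_2)\cong H\otimes\Lambda^2H$ the evaluation $D\mapsto D(\omega)$ becomes the bracket map. I checked the two points you flag as delicate and they do go through: writing $\tilde\phi=\sum_k\xi_k\otimes u_k$ one gets $D(\omega)=\sum_k[\flat^{-1}(\xi_k),u_k]$ exactly as claimed; and if $\iota(\xi)=a\otimes\omega$ then applying the bracket gives $[a,\omega]=0$, whence $a=0$ by injectivity of $[\omega,\cdot]\colon H\to L_3$ (which in fact holds for all $g\geq1$, not just $g\geq2$), so $\iota(\Lambda^3H)\cap(H\otimes\R\omega)=0$. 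The inner-derivation computation $(\flat^{-1}\otimes\mathrm{id})(\ad a)\equiv-\iota(a\wedge\omega)\bmod H\otimes\R\omega$ is also correct on a symplectic basis, and injectivity of $\cdot\wedge\omega\colon H\to\Lambda^3H$ for $g\geq2$ completes the matching of the two exact sequences. One small point worth making explicit in a final write-up: a degree-one derivation of $LH/(\omega)$ lifts to a derivation of $LH$ preserving $(\omega)$ precisely because any linear lift $H\to L_2$ extends uniquely to a derivation of the free Lie algebra, and preserving the ideal is equivalent to $\tilde D(\omega)\in(\omega)\cap L_3=[\omega,H]$; you use this but state it only implicitly.
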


According to \cite{KM}, all Morita-Miller-Mumford classes of $\mathcal{M}_{g}$ are constructed from the twisted cohomology class $[\tilde{k}]\in H^1(\mathcal{M}_{g};\Lambda^3 H/H)$, which is the cohomology class of a crossed homomorphism $\tilde{k}:\mathcal{M}_g\to \Lambda^3H/H$ defined in \cite{Mor1}, through the map $\alpha^\text{KM}:\Hom(\Lambda^*(\Lambda^3H/H),\R)^{\Sp(H)}\to H^*(\mathcal{M}_{g};\R)$ defined by
\[c\mapsto c([\tilde{k}]^m)\]
for $c\in \Hom(\Lambda^m(\Lambda^3H/H),\R)^{\Sp(H)}$. Here the power is related to the cup product. We shall describe a relation between the map $\alpha_\text{KM}$ and our construction.

Since every hyperbolic metric on $\Sigma_g$ admits a K\"ahler structure, the corresponding ideal of a hyperbolic metric is $(\omega)$. So we can define the map $\theta_C:\mathcal{T}_{g}\to \Th(\gr,(\omega))$ by giving Chen expansions corresponding to hyperbolic metrics. It can be constructed by the argument which is an analogue for the case of 1-punctured surfaces in \cite{Kaw2}. Pulling back the flat connection $\eta\in A^1(\Th(\gr,(\omega));\ODer^+(\ct/(\omega)))$ defined by the action of $\IOut(\ct/(\omega))$ on $\Th(\gr,(\omega))$, we obtain the flat connection $\theta_C^*\eta\in A^1(\mathbb{M}_{g};\mathcal{T}_{g}\times_{\mathcal{M}_{g}}\ODer^+(\ct/(\omega)))$. Since
\[\theta^*_C:A^*(\Th(\gr,(\omega)))^{\mathcal{M}_{g}}\to A^*(\mathcal{T}_{g})^{\mathcal{M}_{g}}=A^*(\mathbb{M}_{g}),\]
we obtain 
\[\theta_C^*(c(\eta_1^m))=c([\theta^*_C\eta_1]^m)=
(-1)^mc([\tilde{k}]^m)=(-1)^m\alpha^\text{KM}(c)=:\bar{\alpha}^\text{KM}(c)\]
for $c\in \Hom(\Lambda^*(\Lambda^3H/H),\R)^{\Sp(H)}\subset Z^*(\ODer^+(\ct/(\omega)))^{\Sp(H)}$. Here we use $-[\theta^*_C\eta_1]=[\tilde{k}]\in H^1(\mathcal{M}_{g};\Lambda^3H/H)\simeq H^1(\mathbb{M}_{g};\mathcal{T}_{g}\times_{\mathcal{M}_{g}}\Lambda^3H/H)$, where $\eta_1$ is the $\ODer^1(\ct/(\omega))$-component of $\eta$. It is proved as follows: the crossed homomorphism $\tau^\theta_1$ for any $\theta\in \Th(\gr,(\omega))$ is an extension of the first Johnson homomorphism $\tau_1:\mathcal{I}_g\to \Lambda^3H/H$ from Prop \ref{prop:out}. The equation \[[\tilde{k}]=[\tau^\theta_1]\in H^1(\mathcal{M}_{g};\Lambda^3H/H)\] 
follows from the result of \cite{Mor1}. (See \cite{Kaw1}.) The relation between the holonomy $\tau^\theta$ and the iterated integral of the flat connection $\eta$ gives
\[\tau^\theta(\varphi)^{-1}
=\sum_{n=0}^\infty\int_{\theta}^{\varphi\cdot\theta}\underbrace{\eta\cdots\eta}_n\]
for $\varphi\in\mathcal{M}_{g}$ and $\theta\in\Th(\gr,(\omega))$. Therefore the correspondence follows from the equation \[\tau^{\theta_C(x)}_1(\varphi)=-\int_{\theta_C(x)}^{\varphi\cdot \theta_C(x)}\eta_1=-\int_x^{\varphi_* x}\theta_C^*\eta_1\]
for $\varphi\in\mathcal{M}_{g}$ and a fixed point $x\in\mathcal{T}_{g}$. So we obtain $[\tilde{k}]=[\tau^{\theta_C(x)}_1]=-[\theta_C^*\eta_1].$

Thus we get the following theorem:
\begin{thm}\label{thm:coh}
{\it The diagram
\[
\xymatrix{\Hom(\Lambda^*(\Lambda^3H/H),\R)^{\Sp(H)}\ar[drr]_{\bar{\alpha}^\text{KM}}
\ar[r]
&H^*(\ODer^+(\ct/(\omega)))^{\Sp(H)}\ar[r]^{\quad\quad\quad\theta_C^*}&H^*_{DR}(\mathbb{M}_{g})\ar@{=}[d]
\\&&H^*(\mathcal{M}_{g};\R)}
\]
commutes.}
\end{thm}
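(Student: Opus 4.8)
The plan is to recognize that, after the canonical identification $H^*_{DR}(\mathbb{M}_g)\cong H^*(\mathcal{M}_g;\R)$ written on the right edge of the diagram, the asserted commutativity is exactly the identity $\theta_C^*\big(c(\eta_1^m)\big)=\bar{\alpha}^{\text{KM}}(c)$ for every $c\in\Hom(\Lambda^m(\Lambda^3H/H),\R)^{\Sp(H)}$, and then to assemble this identity from the ingredients already in place. First I would check that the top horizontal arrow is well defined. Via the Morita identification $\ODer^1(LH/(\omega))=\Lambda^3H/H$ of \cite{Mor2}, such a $c$ is a Chevalley--Eilenberg cochain of $\ODer^+(\ct/(\omega))$ supported on the degree-one summand. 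The single structural fact $[\ODer^1,\ODer^1]\subset\ODer^2$ then does two jobs: it forces $dc=0$ (every bracket term of the Chevalley--Eilenberg differential pushes an argument out of $\ODer^1$, where $c$ vanishes), so $c$ is a $\Sp(H)$-invariant cocycle defining a class in $H^m(\ODer^+(\ct/(\omega)))^{\Sp(H)}$; and, through the Maurer--Cartan equation $d\eta+\tfrac12[\eta,\eta]=0$, it gives $d\eta_1=0$ for the $\ODer^1$-component $\eta_1$ of $\eta$. Feeding $c$ into the chain map $c\mapsto c(\eta^m)$ of Subsection 3.4 and using that $c$ detects only $\eta_1$ yields the closed invariant form $c(\eta_1^m)$ on $\Th(\gr,(\omega))$, which represents $c$ applied to the $m$-fold cup power of $[\eta_1]$.

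Next I would pull back along $\theta_C$. Because pullback of forms respects wedge products and $\eta_1$ is closed, on cohomology $\theta_C^*$ carries $c(\eta_1^m)$ to $c\big([\theta_C^*\eta_1]^m\big)$. The whole statement then reduces to the single identification $[\theta_C^*\eta_1]=-[\tilde{k}]$ in $H^1(\mathcal{M}_g;\Lambda^3H/H)\cong H^1(\mathbb{M}_g;\mathcal{T}_g\times_{\mathcal{M}_g}\Lambda^3H/H)$. For this I would use the holonomy description of the expansion: the holonomy of the flat connection $\eta$ along a path from $\theta$ to $\varphi\cdot\theta$ recovers $\tau^\theta(\varphi)^{-1}$ as an iterated integral, and extracting the linear part gives $\tau_1^{\theta_C(x)}(\varphi)=-\int_x^{\varphi_*x}\theta_C^*\eta_1$ for a fixed basepoint $x\in\mathcal{T}_g$. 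Under the de Rham--to--group-cohomology dictionary on the aspherical orbifold $\mathbb{M}_g$ this exhibits $[\theta_C^*\eta_1]$ as $-[\tau_1^{\theta_C(x)}]$. By Proposition \ref{prop:out} the crossed homomorphism $\tau_1^\theta$ extends the first Johnson homomorphism $\tau_1\colon\mathcal{I}_g\to\Lambda^3H/H$, and by Morita's computation \cite{Mor1} (see also \cite{Kaw1}) its class equals $[\tilde{k}]$; hence $[\theta_C^*\eta_1]=-[\tilde{k}]$.

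Combining the two steps produces the chain
\[
\theta_C^*\big(c(\eta_1^m)\big)=c\big([\theta_C^*\eta_1]^m\big)=(-1)^m c\big([\tilde{k}]^m\big)=(-1)^m\alpha^{\text{KM}}(c)=\bar{\alpha}^{\text{KM}}(c),
\]
which is precisely the commutativity of the triangle. I expect the one genuine difficulty to lie in the middle step: verifying with correct signs that the pullback of the linear part of the Maurer--Cartan connection is a de Rham representative of $-[\tilde{k}]$. This requires carefully matching the iterated-integral holonomy of $\eta$ with the first Johnson map and translating between the de Rham cohomology of $\mathbb{M}_g$ and the group cohomology of $\mathcal{M}_g$. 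By contrast, the cocycle condition and algebraic reductions of the first step, together with the final sign bookkeeping, are routine once that identification is secured.
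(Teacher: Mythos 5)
Your proposal is correct and takes essentially the same route as the paper: the identical chain of equalities $\theta_C^*\bigl(c(\eta_1^m)\bigr)=c\bigl([\theta_C^*\eta_1]^m\bigr)=(-1)^m c\bigl([\tilde{k}]^m\bigr)=\bar{\alpha}^{\text{KM}}(c)$, with the key identification $[\theta_C^*\eta_1]=-[\tilde{k}]$ established exactly as in the paper via the iterated-integral holonomy of $\eta$, Proposition~\ref{prop:out}, and Morita's theorem from \cite{Mor1}. The only difference is that you explicitly verify the routine points that $c$ extends to a Chevalley--Eilenberg cocycle and that $\eta_1$ is closed (both from degree reasons in the bracket), which the paper leaves implicit in the inclusion $\Hom(\Lambda^*(\Lambda^3H/H),\R)^{\Sp(H)}\subset Z^*(\ODer^+(\ct/(\omega)))^{\Sp(H)}$.
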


The map $\theta_C^*$ can be interpreted from the viewpoint of our characteristic map as follows. We consider the oriented $\Sigma_g$-bundle ${\rm Met}_g\times_{{\rm Diff}_+(\Sigma_g)}\Sigma_g\to \mathbb{M}_{g}$. We note that ${\rm Met}_g\times_{{\rm Diff}_+(\Sigma_g)}\Sigma_g\to \mathbb{M}_{g}$ is not a fiber bundle exactly since the fiber on $[x]\in \mathbb{M}_g$, $x\in {\rm Met}_g$, is isomorphic to the grobal quotient orbifold $\Sigma_g/\Isom(\Sigma_g,x)$, where $\Isom(\Sigma_g,x)$ is the isometry group of Riemann surface $(\Sigma_g,x)$. However our construction of the characteristic map works as well as fiber bundles. We give the tautological metric $\mu$ to this bundle, i.e. the metric $\mu_c$ on fiber of $c$ represents the class $c$ for all $c\in\mathbb{M}_{g}$. 

The chain map constructed from the Chen expansion of $\mu$ in the manner of Subsection 3.2 is equal to $\theta_C^*:A^*(\Th(\gr,(\omega)))^{\mathcal{M}_{g}}\to A^*(\mathbb{M}_{g})$. 
\begin{thm}
{\it The homomorphism $\theta_C^*:H^*(\ODer^+(\ct/(\omega)))^{\Sp(H)}\to H^*_{DR}(\mathbb{M}_{g})$ is the characteristic map of the fiber bundle ${\rm Met}_{g}\times_{{\rm Diff}_+(\Sigma_g)}\Sigma_g\to \mathbb{M}_{g}$ constructed in Subsection 3.2.}
\end{thm}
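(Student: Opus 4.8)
The plan is to unwind the Subsection 3.2 construction for this particular bundle and metric and match it with $\theta_C^*$ term by term; by the paragraph preceding the statement, what remains is to identify, on the cochain level, the map $A^*(\Th(\gr,(\omega)))^{\cM_g}\to A^*(\bM_g)$ produced in Subsection 3.2 with $\theta_C^*$.

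First I would record the data entering Subsection 3.2 for this bundle: the fiber is $X=\Sigma_g$, the images of the structure group $\Diff_+(\Sigma_g)$ are $\tilde S=\cM_g$ in $\Out(\gr)$ and $S=\Sp(H)$ in $\GL(\gr)$, and $\gr=\pi_1(\Sigma_g)$ carries the homogeneous ideal $I_\gr=(\omega)$ together with the required splitting of $\Der^+(\ct)$ as graded $\Sp(H)$-modules (complete reducibility). Lemma~\ref{lem:contractible} then furnishes the $\cM_g$-equivariant identification $\Th(\gr)\simeq\mathcal{I}(\gr)\times\Th(\gr,(\omega))$, and the construction outputs the chain map $A^*(\Th(\gr,(\omega)))^{\cM_g}\to A^*(\bM_g)$.

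Next I would feed the tautological metric $\mu$ into this machine. By definition $\mu$ restricts on the fiber over $[x]\in\bM_g$ to the constant-curvature $-1$ metric representing $[x]$; since every hyperbolic surface is K\"ahler, the associated Chen expansion has ideal exactly $(\omega)$, as recalled at the start of this section. Consequently the ideal map $q\colon\bM_g\to\mathcal{I}(\gr)$ is constant at the point $(\omega)$, the Chen-expansion assignment lands entirely in the single fiber $\Th(\gr,(\omega))$ of $\Th(\gr)\to\mathcal{I}(\gr)$, and the homotopy-equivalence step of Subsection 3.2 becomes unnecessary: locally on $\bM_g$ the construction simply sends a point to the conjugacy class of the Chen expansion of its hyperbolic metric. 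Lifting along $\bM_g=\mathcal{T}_g/\cM_g$, this assignment is by definition $\theta_C\colon\mathcal{T}_g\to\Th(\gr,(\omega))$, $x\mapsto[\theta_{\omega_{\mu_x}}]$. The equivariance relations~(\ref{equiv}) show that $\theta_C$ intertwines the $\cM_g$-action on $\mathcal{T}_g$ with the action of $\cM_g$ on $\Th(\gr,(\omega))$ through $\cM_g\to\Out(\gr)$, so $\theta_C^*$ sends $\cM_g$-invariant forms on $\Th(\gr,(\omega))$ into $A^*(\mathcal{T}_g)^{\cM_g}=A^*(\bM_g)$; comparing with the gluing recipe of Section 3 then identifies the two chain maps, and hence the two maps on cohomology.

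The main obstacle is that ${\rm Met}_g\times_{\Diff_+(\Sigma_g)}\Sigma_g\to\bM_g$ is not a fiber bundle in the strict sense—the fiber over $[x]$ is the quotient orbifold $\Sigma_g/\Isom(\Sigma_g,x)$—so the local-trivialize-and-glue definition of $\ch$, together with the chain-homotopy argument for its independence of choices, must be justified in this orbifold setting rather than quoted verbatim. I would resolve this by carrying out the construction $\cM_g$-equivariantly upstairs on the contractible Teichm\"uller space $\mathcal{T}_g$, where $\mu$ pulls back to an honest tautological family of metrics with no isotropy obstruction, and only then descending via $A^*(\mathcal{T}_g)^{\cM_g}=A^*(\bM_g)$, using that the Chen expansion depends smoothly on the metric to ensure the forms obtained downstairs are smooth.
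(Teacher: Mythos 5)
Your proposal is correct and follows essentially the same route as the paper: the paper's (very terse) argument is precisely that feeding the tautological metric $\mu$ into the Subsection~3.2 construction yields a Chen-expansion assignment that is by definition $\theta_C$, with the orbifold issue handled by working $\cM_g$-equivariantly on the contractible Teichm\"uller space and using $A^*(\mathcal{T}_g)^{\cM_g}=A^*(\bM_g)$. Your additional observations --- that the ideal map is constant at $(\omega)$ so the image lies in the single fiber $\Th(\gr,(\omega))$, and that the equivariance relations ensure descent of the pulled-back invariant forms --- are exactly the details the paper leaves implicit.
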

The theorem above means our characteristic maps are non-trivial.

\end{document}